\newtheorem{thm}{Theorem}[section]
\newtheorem{lem}[thm]{Lemma}
\newtheorem{prop}[thm]{Proposition}
\theoremstyle{definition}
\newtheorem{defn}[thm]{Definition}
\theoremstyle{remark}
\newtheorem{rem}[thm]{Remark}
\theoremstyle{definition}
\newtheorem{question}[thm]{Question}
\numberwithin{equation}{section}
\newcommand*\circleD[1]{\tikz[baseline=(char.base)]{
            \node[shape=circle,draw,inner sep=0pt,minimum size=7mm] (char) {#1};}}
\newenvironment{tight_enumerate}{
\begin{enumerate}
  \setlength{\itemsep}{0pt}
  \setlength{\parskip}{0pt}
}{\end{enumerate}}
\newcommand{\rootsG}[8]{\circleD{#1}
            \begin{tabular}{cccccc}
             &&&#2&& \\
             #3&#4&#5&#6&#7&#8
             \end{tabular}}
\newcommand{\rootsE}[7]{\circleD{#1}
            \begin{tabular}{ccccc}
             &&#2&& \\
             #3&#4&#5&#6&#7
             \end{tabular}}
\newcommand{\rootsC}[9]{\circleD{#1}
            \begin{tabular}{ccccccc}
             &&&&#2&& \\
             #3&#4&#5&#6&#7&#8&#9
             \end{tabular}}
\newcommand{\Rmnum}[1]{\expandafter\@slowromancap\romannumeral #1@}
\begin{document}

\title{Non-separability and complete reducibility: $E_n$ examples with an application to a question of K\"ulshammer}
\author{Tomohiro Uchiyama\\
National Center for Theoretical Sciences, Mathematics Division\\
No.~1, Sec.~4, Roosevelt Rd., National Taiwan University, Taipei, Taiwan\\
\texttt{email:t.uchiyama2170@gmail.com}}
\date{}
\maketitle

\begin{abstract}
Let $G$ be a simple algebraic group of type $E_n (n=6,7,8)$ defined over an algebraically closed field $k$ of characteristic $2$. We present examples of triples of closed reductive groups $H<M<G$ such that $H$ is $G$-completely reducible, but not $M$-completely reducible. As an application, we consider a question of K\"ulshammer on representations of finite groups in reductive groups. We also consider a rationality problem for $G$-complete reducibility and a problem concerning conjugacy classes. 
\end{abstract}

\noindent \textbf{Keywords:} algebraic groups, separable subgroups, complete reducibility, representations of finite groups 
\section{Introduction}
Let $G$ be a connected reductive algebraic group defined over an algebraically closed field $k$ of characteristic $p$. In ~\cite[Sec.~3]{Serre-building}, J.P. Serre defined the following:
\begin{defn}
A closed subgroup $H$ of $G$ is \emph{$G$-completely reducible} ($G$-cr for short) if whenever $H$ is contained in a parabolic subgroup $P$ of $G$, $H$ is contained in a Levi subgroup $L$ of $P$. 
\end{defn}
This is a faithful generalization of the notion of semisimplicity in representation theory: if $G=GL_n(k)$, a subgroup $H$ of $G$ is $G$-cr if and only if $H$ acts semisimply on $k^n$~\cite[Ex.~3.2.2(a)]{Serre-building}. If $p=0$, the notion of $G$-complete reducibility agrees with the notion of reductivity~\cite[Props.~4.1, 4.2]{Serre-building}. In this paper, we assume $p>0$. In that case, if a subgroup $H$ is $G$-cr, then $H$ is reductive~\cite[Prop.~4.1]{Serre-building}, but the other direction fails: take $H$ to be a unipotent subgroup of order $p$ of $G=SL_2$. See~\cite{Stewart-nonGcr} for examples of connected non-$G$-cr subgroups. In this paper, by a subgroup of $G$, we always mean a closed subgroup.  

 Completely reducible subgroups have been much studied as important ingredients to understand the subgroup structure of connected reductive algebraic groups~\cite{Liebeck-Seitz-memoir}, \cite{Liebeck-Testerman-irreducible-QJM}, \cite{Thomas-irreducible-JOA}. Recently, studies of complete reducibility via Geometric Invariant Theory (GIT for short) have been fruitful~\cite{Bate-separability-TransAMS}, \cite{Bate-uniform-TransAMS}, \cite{Bate-geometric-Inventione}. In this paper, we use a recent result from GIT (Proposition~\ref{GIT}). 

Here is the first problem we consider in this paper. Let $H<M<G$ be a triple of reductive algebraic groups.  It is known to be hard to find such a triple with $H$ $G$-cr but not $M$-cr~\cite{Bate-separability-TransAMS},~\cite{Uchiyama-Separability-JAlgebra}. The only known such examples are~\cite[Sec.~7]{Bate-separability-TransAMS} for $p=2, G=G_2$ and~\cite{Uchiyama-Separability-JAlgebra} for $p=2, G=E_7$. Recall that a pair of reductive groups $G$ and $M$ is called a \emph{reductive pair}
if ${\rm Lie}\,M$ is an $M$-module direct summand of $\mathfrak{g}$. For more on reductive pairs, see~\cite{Goodbourn-reductivepairs}. Our main result is:

\begin{thm}\label{main}
Let $G$ be a simple algebraic group of type $E_6$ (respectively $E_7$, $E_8$) of any isogeny type defined over an algebraically closed field $k$ of characteristic $2$. Then there exist reductive subgroups $H<M$ of $G$ such that $H$ is finite, $M$ is semisimple of type $A_5 A_1$ (respectively $A_7$, $D_8$), $(G,M)$ is a reductive pair, and $H$ is $G$-cr but not $M$-cr. 
\end{thm}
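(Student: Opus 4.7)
The plan is to handle the three cases $G = E_6, E_7, E_8$ by a single strategy, specialized to each type via the relevant root-system combinatorics. In each case $M$ will be realized as the maximal-rank semisimple subgroup obtained from the extended Dynkin diagram of $G$: deleting the appropriate node yields an $A_5 A_1$ subsystem in $E_6$, an $A_7$ subsystem in $E_7$, and a $D_8$ subsystem in $E_8$.

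First, I would verify that $(G,M)$ is a reductive pair in characteristic $2$. The complement of $\mathfrak{m}$ in $\mathfrak{g}$ is in each case a single irreducible self-dual $M$-module of dimension $40$, $70$, or $128$ respectively (the third exterior of the natural tensored with the $A_1$ natural for $A_5 A_1$, the fourth exterior of the natural for $A_7$, and a half-spin module for $D_8$; these dimensions match $78-38$, $133-63$, and $248-120$). Establishing $\mathfrak{g} = \mathfrak{m} \oplus V$ as $M$-modules in characteristic $2$ reduces to comparing composition factors and ruling out non-split extensions using self-duality.

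The heart of the proof is the construction of $H$. Inside $M$ I would choose a parabolic $P_M = L_M Q_M$ and an involution $u \in Q_M$ (exploiting that $p=2$) chosen so that $u$ is not $Q_M$-conjugate to any element of $L_M$; setting $H := \langle u \rangle$, possibly enlarged by an involution in $L_M$ to rigidify the configuration, then yields a finite subgroup of $P_M$ that is contained in no $M$-Levi of any $M$-parabolic containing it, hence $H$ is not $M$-cr. To establish that $H$ \emph{is} $G$-cr I would invoke Proposition~\ref{GIT}: given any cocharacter $\lambda$ of $G$ with $H \subseteq P_\lambda(G)$, the extra root directions of $\mathfrak{g}$ transverse to $\mathfrak{m}$ should provide an explicit element of the unipotent radical of $P_\lambda(G)$ conjugating $H$ into $L_\lambda(G)$. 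Equivalently, one may exhibit directly a Levi subgroup of $G$ containing $H$.

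The principal obstacle will be the case-by-case construction of $u$ together with two verifications: non-conjugacy inside $M$, which reduces to a centralizer calculation in $L_M$; and existence of the conjugating element in $G$, which reduces to a search among root subgroups outside $\mathfrak{m}$. The $D_8 \subset E_8$ case will likely be the most delicate because the $128$-dimensional half-spin module is large and has subtle characteristic-$2$ structure; the $A_5 A_1 \subset E_6$ case is the smallest and should serve as a template. The overall mechanism extends the method used for the $E_7$ case in~\cite{Uchiyama-Separability-JAlgebra}, but with genuinely new root-theoretic content because the subsystem types $A_5 A_1$ and $D_8$ differ structurally from $A_7$.
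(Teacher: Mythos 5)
There is a genuine gap, and it sits at the heart of your construction. You propose $H:=\langle u\rangle$ for a unipotent involution $u$ in the unipotent radical $Q_M$ of a parabolic of $M$. Such an $H$ is a nontrivial unipotent subgroup of $G$, and nontrivial unipotent subgroups are never $G$-cr: by Borel--Tits, $H$ lies in the unipotent radical of some parabolic $P$ of $G$, and then $H$ cannot lie in any Levi subgroup of $P$. (The paper itself cites the order-$p$ unipotent subgroup of $SL_2$ as the standard non-$G$-cr example.) Enlarging $H$ by an involution of $L_M$ does not obviously repair this, and you give no mechanism that would. More fundamentally, you never engage with separability, which is the whole point: since $(G,M)$ is a reductive pair, Proposition~\ref{reductive-pair} forces any $H$ with the desired properties to be a \emph{non-separable} subgroup of $G$, i.e.\ one with $\operatorname{Lie}C_G(H)\subsetneq\mathfrak{c}_{\mathfrak{g}}(H)$. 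A proof that does not produce and exploit this infinitesimal defect cannot succeed, because for separable $H$ the conclusion ``$G$-cr implies $M$-cr'' holds automatically.

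The paper's actual route is quite different. It takes a finite subgroup $K$ of a Levi subgroup $L$ of $G$ (generated by canonical Weyl-group representatives $n_\zeta$, possibly together with a torsion element of $T$), chosen by computer search so that $K$ acts non-separably on $R_u(P)$: concretely, there is a vector such as $e_7+e_8$ lying in $\mathfrak{c}_{\operatorname{Lie}R_u(P)}(K)$ but not in $\operatorname{Lie}C_{R_u(P)}(K)$. One then sets $H:=v(a)Kv(a)^{-1}$ for a unipotent element $v(a)$ along the corresponding curve. Then $H$ is $G$-cr essentially for free ($K$ lies in $L$, is $L$-cr by an $SL_6$/$SL_7$/$SL_8$ matrix computation, hence $G$-cr, and $G$-complete reducibility is conjugation-invariant), while the failure of $M$-complete reducibility is exactly where Proposition~\ref{GIT} and the non-separability computation are used: any element of $R_u(P_\lambda(M))$ conjugating $c_\lambda(H)=K$ back to $H$ would force $v(a)$ times a correction from the single root direction of $R_u(P_\lambda(M))$ to centralize $K$, contradicting the explicit description of $C_{R_u(P)}(K)$. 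Your proposal inverts the difficulty (making $H$ visibly non-$M$-cr and hoping $G$-complete reducibility comes out), but without the non-separability mechanism there is no reason for such an $H$ to be $G$-cr, and in the unipotent case it provably is not.
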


In this paper, we present new examples with the properties of Theorem~\ref{main} giving an explicit description of the mechanism for generating such examples. We give $11$ examples for $G=E_6$, $1$ new example for $G=E_7$, and $2$ examples for $G=E_8$. We use \textsf{Magma}~\cite{Magma} for our computations. Recall that $G$-complete reducibility is invariant under isogenies~\cite[Lem.~2.12]{Bate-geometric-Inventione}; in Sections~3,4, and 5, we do computations for simply-connected $G$ only, but that is sufficient to prove Theorem~\ref{main} for $G$ of any isogeny type.

We recall a few relevant definitions and results from~\cite{Bate-separability-TransAMS}, \cite{Uchiyama-Separability-JAlgebra}, which motivated our work. We denote the Lie algebra of $G$ by $\rm{Lie}\,G=\mathfrak{g}$. 

\begin{defn}
Let $H$ and $N$ be subgroups of $G$ where $H$ acts on $N$ by group automorphisms. The action of $H$ is called \emph{separable} in $N$ if the global centralizer of $H$ in $N$ agrees with the infinitesimal centralizer of $H$ in $\textup{Lie}\;N$, that is, $C_N(H)= \mathfrak{c}_{{\rm Lie}\,N}(H)$. Note that the condition means that the set of fixed points of $H$ acting on $N$, taken with its natural scheme structure, is smooth. 
\end{defn}
This is a slight generalization of the notion of separable subgroups. Recall that

\begin{defn}
Let $H$ be a subgroup of $G$ acting on $G$ by inner automorphisms. Let $H$ act on $\mathfrak{g}$ by the corresponding adjoint action. Then $H$ is called \emph{separable} if ${\rm Lie}\,C_G(H) = \mathfrak{c}_{\mathfrak{g}}(H)$.
\end{defn}

Note that we always have ${\rm Lie}\,C_G(H) \subseteq \mathfrak{c}_{\mathfrak{g}}(H)$. In \cite{Bate-separability-TransAMS}, Bate et al.~investigated the relationship between $G$-complete reducibility and separability, and showed the following  \cite[Thm.~1.2, Thm.~1.4]{Bate-separability-TransAMS} (see~\cite{Herpel-smoothcentralizerl-trans} for more on separability).

\begin{prop}\label{p-good-separability}
Suppose that $p$ is very good for $G$. Then any subgroup of $G$ is separable in $G$.
\end{prop}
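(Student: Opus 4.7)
Since $\mathrm{Lie}\,C_G(H) \subseteq \mathfrak{c}_{\mathfrak{g}}(H)$ is automatic, the task is the reverse inclusion. Equivalently, writing $G^H$ for the scheme-theoretic fixed-point subscheme of $G$ under $H$-conjugation, one must show that $G^H$ is smooth at the identity: its tangent space at $e$ is $\mathfrak{c}_{\mathfrak{g}}(H)$ and its reduced subscheme is $C_G(H)$, so smoothness at $e$ is precisely the equality $\dim C_G(H) = \dim \mathfrak{c}_{\mathfrak{g}}(H)$.

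The plan is a reduction to $G$ simple followed by a dimension count exploiting a non-degenerate invariant form on $\mathfrak{g}$. First, I would reduce to $G$ simple: both the very-good hypothesis and separability are compatible with direct products of simple factors and with central isogenies (central summands contribute the same dimension to both sides of the desired equality), so it suffices to treat each simple factor in a convenient isogeny type. Second, I would invoke the principal consequence of the very-good hypothesis, namely the existence of a non-degenerate $G$-invariant symmetric bilinear form $\kappa$ on $\mathfrak{g}$ (the Killing form in most cases, with trace-form replacements available for the classical types near the boundary of non-degeneracy). This yields an $H$-equivariant identification $\mathfrak{g} \cong \mathfrak{g}^*$, and hence an orthogonality with $V^{\perp\perp} = V$ and $(V_1 \cap V_2)^\perp = V_1^\perp + V_2^\perp$.

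With $\kappa$ in hand, the dimension computation runs as follows. For each $h \in H$ the differential at $e$ of $g \mapsto hgh^{-1}$ is $\mathrm{Ad}(h) - 1 : \mathfrak{g} \to \mathfrak{g}$, with kernel $\mathfrak{c}_{\mathfrak{g}}(h)$; invariance of $\kappa$ forces $\mathrm{im}(\mathrm{Ad}(h)-1)^\perp = \mathfrak{c}_{\mathfrak{g}}(h)$. Dualising intersections to sums gives $\mathfrak{c}_{\mathfrak{g}}(H)^\perp = \sum_{h \in H}\mathrm{im}(\mathrm{Ad}(h)-1)$, so $\dim \mathfrak{c}_{\mathfrak{g}}(H) = \dim \mathfrak{g} - \dim \sum_{h \in H}\mathrm{im}(\mathrm{Ad}(h)-1)$.

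The main obstacle I anticipate is matching the codimension of this sum with $\mathrm{codim}\,C_G(H)$. For a single element this follows from the classical smoothness of $Z_G(h)$ in good (hence very good) characteristic, but for a subgroup the passage is nontrivial, since an intersection of smooth subschemes need not be smooth. To close this gap, one standard route is to use the Jordan decomposition together with the Springer isomorphism available in very good characteristic: handle the semisimple part of $H$ by induction, exploiting that $Z_G(s)$ is a reductive subgroup for which $p$ remains very good, and reduce the unipotent part to a nilpotent question on $\mathfrak{g}$ where $\kappa$ applies directly and the image/kernel orthogonality already established gives the required dimension match.
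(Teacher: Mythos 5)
First, a caveat: the paper offers no proof of Proposition~\ref{p-good-separability}; it is quoted from Bate--Martin--R\"ohrle \cite[Thms.~1.2, 1.4]{Bate-separability-TransAMS}, so the comparison below is with the argument in that reference. Your proposal has a genuine gap, and you have located it yourself: everything through the identity $\mathfrak{c}_{\mathfrak{g}}(H)^{\perp}=\sum_{h\in H}\mathrm{im}(\mathrm{Ad}(h)-1)$ is only a reformulation of $\dim\mathfrak{c}_{\mathfrak{g}}(H)$ and gives no control on $\dim C_G(H)$, which is the entire content of the statement. The repair you sketch does not work. The proposition concerns an \emph{arbitrary} closed subgroup $H$, for which there is no Jordan decomposition and hence no ``semisimple part of $H$'' to induct on; and even for a single semisimple element the claim that $p$ remains very good for $Z_G(s)$ is false: $p=7$ is very good for $E_8$, yet $E_8$ contains semisimple elements whose connected centralizer has a simple factor of type $A_6$, for which $7$ is not very good. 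Finally, smoothness of $C_G(u)$ for a single element in very good characteristic is itself a substantial theorem, and the passage from single elements to subgroups is exactly the problem, since an intersection of smooth centralizers need not be smooth.

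The proof in the cited reference uses the very-good hypothesis quite differently, and in a way that meshes with the rest of this paper. One first observes that \emph{every} subgroup of $GL(V)$ is separable in $GL(V)$, with no hypothesis on $p$: the centralizer $C_{GL(V)}(H)$ is the unit group of the associative centralizer algebra $A\subseteq\mathfrak{gl}(V)$, hence is open in $A=\mathfrak{c}_{\mathfrak{gl}(V)}(H)$ and has the same dimension. The very-good hypothesis enters only to furnish a representation $V$ for which $(GL(V),G)$ is a reductive pair, via nondegeneracy of the associated trace form on $\mathfrak{g}$ (your instinct to hedge on the Killing form was right: it vanishes identically on $\mathfrak{sl}_3$ in characteristic $2$ even though $2$ is very good for $A_2$). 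Richardson's tangent-space argument then transfers separability from $GL(V)$ down to $G$ along the reductive pair --- the same machinery underlying Propositions~\ref{reductive-pair} and~\ref{conjugacy}. So the nondegenerate invariant form is indeed the right tool, but it is used to realize $G$ inside $GL(V)$ as a reductive pair rather than for an orthogonality computation inside $\mathfrak{g}$; the transfer lemma along reductive pairs is the missing ingredient you would need to prove or cite to complete your approach.
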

\begin{prop}\label{reductive-pair}
Suppose that $(G, M)$ is a reductive pair. Let $H$ be a subgroup of $M$ such that $H$ is a separable subgroup of $G$. If $H$ is $G$-cr, then it is also $M$-cr. 
\end{prop}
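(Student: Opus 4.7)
My proof plan is to use the GIT characterization of complete reducibility advertised in the excerpt (Proposition~\ref{GIT}): $H$ is $G$-cr iff for any topological generating tuple $\mathbf{h}=(h_1,\dots,h_n)$ of $H$, the diagonal conjugation orbit $G\cdot\mathbf{h}$ is closed in $G^n$; likewise for $M$. Since $H\leq M$, the tuple lies in $M^n$, and the $G$-orbit $G\cdot\mathbf{h}$ is closed in $G^n$ by hypothesis. Hence $(G\cdot\mathbf{h})\cap M^n$ is closed in $M^n$, and $M\cdot\mathbf{h}$ sits inside this intersection. The whole point will be to show that the intersection is a finite union of $M$-orbits of the same dimension as $M\cdot\mathbf{h}$, so that $M\cdot\mathbf{h}$ (being of maximal dimension among its neighbours in the intersection) is closed in $M^n$.

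The core of the argument is a tangent space calculation at $\mathbf{h}$. Because $(G,M)$ is a reductive pair I may decompose $\mathfrak{g}={\rm Lie}\,M\oplus V$ as $M$-modules; taking $H$-fixed points gives
\[
\mathfrak{c}_{\mathfrak{g}}(H)=\mathfrak{c}_{{\rm Lie}\,M}(H)\oplus\mathfrak{c}_{V}(H).
\]
The differential at $1\in G$ of the orbit map $\phi_G\colon G\to G^n$, $g\mapsto g\cdot\mathbf{h}\cdot g^{-1}$, has kernel $\mathfrak{c}_{\mathfrak{g}}(H)$, and analogously $d\phi_M$ has kernel $\mathfrak{c}_{{\rm Lie}\,M}(H)$. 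Since $H$ is a separable subgroup of $G$ we have $\dim C_G(H)=\dim\mathfrak{c}_{\mathfrak{g}}(H)$, so the orbit map $\phi_G$ is separable onto its image; in particular $G\cdot\mathbf{h}$ is smooth at $\mathbf{h}$ with tangent space $d\phi_G(\mathfrak{g})$.

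Next I argue that the $M$-equivariance of the splitting pushes forward to an $M$-equivariant splitting of the tangent space, and that $d\phi_G(V)$ sits inside an $M$-stable complement to $T_\mathbf{h}(M^n)$ in $T_\mathbf{h}(G^n)$. This yields the clean identity
\[
T_\mathbf{h}\!\bigl((G\cdot\mathbf{h})\cap M^n\bigr)\;\subseteq\;T_\mathbf{h}(G\cdot\mathbf{h})\cap T_\mathbf{h}(M^n)\;=\;d\phi_M({\rm Lie}\,M)\;=\;T_\mathbf{h}(M\cdot\mathbf{h}).
\]
Smoothness of $G\cdot\mathbf{h}$ at $\mathbf{h}$ converts this tangent inequality into the scheme-theoretic bound $\dim_{\mathbf{h}}\bigl((G\cdot\mathbf{h})\cap M^n\bigr)\le\dim M\cdot\mathbf{h}$, and translating by elements of $M$ gives the same bound at every point of the intersection.

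From here the conclusion is formal: every component of $(G\cdot\mathbf{h})\cap M^n$ has dimension at most $\dim M\cdot\mathbf{h}$, so every $M$-orbit in the intersection has dimension exactly $\dim M\cdot\mathbf{h}$ and is therefore of maximal dimension in its $M$-orbit closure; being of maximal dimension in a closed $M$-stable set, each such $M$-orbit is closed, and in particular $M\cdot\mathbf{h}$ is closed, so $H$ is $M$-cr. I expect the main technical obstacle to be Step 3, the identification $T_\mathbf{h}(G\cdot\mathbf{h})\cap T_\mathbf{h}(M^n)=T_\mathbf{h}(M\cdot\mathbf{h})$: both hypotheses are used there, the reductive pair assumption to get an $M$-stable complement and separability to identify the orbit's tangent space with $d\phi_G(\mathfrak{g})$. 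Without either, the intersection of tangent spaces can properly contain $T_\mathbf{h}(M\cdot\mathbf{h})$ and the dimension count breaks.
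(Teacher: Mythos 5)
The paper offers no proof of Proposition~\ref{reductive-pair}: it is quoted verbatim from \cite[Thms.~1.2, 1.4]{Bate-separability-TransAMS}, and the proof there is precisely the Richardson--Slodowy tangent-space argument you are reconstructing (compare Proposition~\ref{conjugacy}, which is the same mechanism). So your overall route is the intended one: pass to the orbit-closure criterion (note that Proposition~\ref{GIT} as stated in this paper is the ``$c_\lambda(h)=vhv^{-1}$'' consequence, not the closed-orbit criterion itself, but the latter is in the same sources), use the reductive-pair splitting $\mathfrak{g}={\rm Lie}\,M\oplus V$ to get $T_{\mathbf{h}}(G^n)\cong ({\rm Lie}\,M)^n\oplus V^n$ with $d\phi_G(V)\subseteq V^n$ (since $V$ is $\mathrm{Ad}(h_i)$-stable), and use separability of $H$ in $G$ to identify $T_{\mathbf{h}}(G\cdot\mathbf{h})$ with $d\phi_G(\mathfrak{g})$.

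Your endgame, however, has three soft spots. First, the asserted equality $T_{\mathbf{h}}(G\cdot\mathbf{h})\cap T_{\mathbf{h}}(M^n)=T_{\mathbf{h}}(M\cdot\mathbf{h})$ is too strong: the intersection is $d\phi_M({\rm Lie}\,M)$, which equals $T_{\mathbf{h}}(M\cdot\mathbf{h})$ only if $H$ is also separable in $M$ --- not a hypothesis. You only have, and only need, $\dim d\phi_M({\rm Lie}\,M)\le\dim M\cdot\mathbf{h}$. Second, ``translating by elements of $M$'' propagates the bound only over $M\cdot\mathbf{h}$; at a point $\mathbf{m}'$ of $N:=(G\cdot\mathbf{h})\cap M^n$ outside that orbit you must conjugate by the element of $G$ carrying $\mathbf{h}$ to $\mathbf{m}'$ and observe that the subgroup topologically generated by $\mathbf{m}'$ is again contained in $M$ and again separable in $G$ (separability is conjugation-invariant); this yields $\dim_{\mathbf{m}'}N\le\dim M\cdot\mathbf{m}'$ at \emph{every} point, which is what the argument actually needs. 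Third, ``every $M$-orbit in $N$ has dimension exactly $\dim M\cdot\mathbf{h}$'' does not follow from your dimension bound (different $M$-orbits in $N$ can a priori have different centralizer dimensions in $M$), and the fallback inference ``an orbit of maximal dimension in a closed invariant set is closed'' is false in general --- the regular orbit in the nilpotent cone is the standard counterexample. The correct finish is: the pointwise bound $\dim_{\mathbf{m}'}N\le\dim M\cdot\mathbf{m}'$ makes $N$ a finite union of $M$-orbits, and if $\overline{M\cdot\mathbf{h}}\setminus M\cdot\mathbf{h}$ contained an orbit $M\cdot\mathbf{m}'$, then $\dim_{\mathbf{m}'}N\ge\dim\overline{M\cdot\mathbf{h}}=\dim M\cdot\mathbf{h}>\dim M\cdot\mathbf{m}'\ge\dim_{\mathbf{m}'}N$, a contradiction; hence $M\cdot\mathbf{h}$ is closed. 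With these repairs your proof coincides with the one in the cited reference.
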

Propositions~\ref{p-good-separability} and~\ref{reductive-pair} imply that the subgroup $H$ in Theorem~\ref{main} must be non-separable, which is possible for small $p$ only.

We recap our method from~\cite{Uchiyama-Separability-JAlgebra}. Fix a maximal torus $T$ of $G=E_6$ (respectively $E_7, E_8$). Fix a system of positive roots. Let $L$ be the $A_5$ (respectively $A_6$, $A_7$)-Levi subgroup of $G$ containing $T$. Let $P$ be the parabolic subgroup of $G$ containing $L$, and let $R_u(P)$ be the unipotent radical of $P$. Let $W_L$ be the Weyl group of $L$. Abusing the notation, we write $W_L$ for the group generated by canonical representatives $n_\zeta$ of reflections in $W_L$. (See Section~2 for the definition of $n_\zeta$.) Now $W_L$ is a subgroup of $L$. 
\begin{tight_enumerate}
\item Find a subgroup $K'$ of $W_L$ acting non-separably on $R_u(P)$. 
\item If $K'$ is $G$-cr, set $K:=K'$ and go to the next step. Otherwise, add an element $t$ from the maximal torus $T$ in such a way that $K:=\langle K'\cup \{t\} \rangle$ is $G$-cr and $K$ still acts non-separably on $R_u(P)$.
\item Choose a suitable element $v\in R_u(P)$ in a $1$-dimensional curve $C$ such that $T_1(C)$ is contained in $\mathfrak{c}_{{\textup Lie}(R_u(P))}(K)$ but not contained in $\textup{Lie}(C_{R_u(P)}(K))$. Set $H:=vKv^{-1}$. Choose a connected reductive subgroup $M$ of $G$ containing $H$ such that $H$ is not $G$-cr. Show that $H$ is not $M$-cr using Proposition~\ref{GIT}.
\end{tight_enumerate}

As the first application of our construction, we consider a rationality problem for $G$-complete reducibility. We need a definition first.

\begin{defn}
Let $k_0$ be a subfield of $k$. Let $H$ be a $k_0$-defined subgroup of a $k_0$-defined reductive algebraic group $G$. Then $H$ is \emph{$G$-completely reducible over $k_0$} ($G$-cr over $k_0$ for short) if whenever $H$ is contained in a $k_0$-defined parabolic subgroup $P$ of $G$, it is contained in some $k_0$-defined Levi subgroup of $P$.
\end{defn}

Note that if $k_0$ is algebraically closed then $G$-cr over $k_0$ means $G$-cr in the usual sense. Here is the main result concerning rationality.

\begin{thm}\label{rationality}
Let $k_0$ be a nonperfect field of characteristic $2$, and let $G$ be a $k_0$-defined split simple algebraic group of type $E_n (n=6,7,8)$ of any isogeny type. Then there exists a $k_0$-defined subgroup $H$ of $G$ such that $H$ is $G$-cr but not $G$-cr over $k_0$.
\end{thm}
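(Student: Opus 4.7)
\medskip

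\noindent\textbf{Proof plan.} Since $k_0$ is nonperfect of characteristic $2$, pick $a \in k_0 \setminus k_0^2$ and set $b := a^{1/2}$. We modify the construction underlying Theorem~\ref{main} by choosing the conjugating element in $R_u(P)$ to be $k_0$-rational only ``after squaring''. From that construction we have the chain $K \subset L \subset P \subset G$ (all $k_0$-defined, indeed $\mathbb{F}_2$-defined in the relevant examples, since $G$ is split) and the $1$-dimensional $k_0$-defined curve $C \subset R_u(P)$ passing through the identity, whose tangent direction at $1$ lies in $\mathfrak{c}_{\textup{Lie}(R_u(P))}(K)\setminus \textup{Lie}(C_{R_u(P)}(K))$. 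Parametrize $C$ by a $k_0$-morphism $c \colon \mathbb{A}^1 \to R_u(P)$ with $c(0)=1$, and set $v := c(b)$ and $H := vKv^{-1}$.

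The first technical step is to show that $H$ is $k_0$-defined, even though $v \in R_u(P)(k_0(b))$ is not $k_0$-rational. The point is that $c'(0)$ lies in the \emph{infinitesimal} centralizer of $K$, so the morphism $t \mapsto c(t)Kc(t)^{-1}$ has vanishing derivative at the origin; in characteristic $2$, combined with the Chevalley commutator relations in $E_n$, this forces the generators $vkv^{-1}$ of $H$ to depend only on $b^2 = a \in k_0$ up to $k_0$-rational data from $K$. We verify this case-by-case (with \textsf{Magma}) in the explicit examples of Theorem~\ref{main}. Granted this, $H$ is $G$-cr, being $k$-conjugate to the $G$-cr subgroup $K$.

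To prove $H$ is not $G$-cr over $k_0$, observe $H \subset P$ and that every $k_0$-defined Levi of $P$ has the form $uLu^{-1}$ for a unique $u \in R_u(P)(k_0)$. Using the semidirect decomposition $P = L \ltimes R_u(P)$, one checks directly that $vKv^{-1} \subset uLu^{-1}$ if and only if $u^{-1} v \in C_{R_u(P)}(K)$, so the proof reduces to verifying $c(b) \notin R_u(P)(k_0) \cdot C_{R_u(P)}(K)$. This is the main obstacle and exactly where nonperfectness is essential: the component of $c(b)$ along $c'(0)$ is a nonzero $k_0$-multiple of $b \notin k_0$, whereas the analogous component of any element of $R_u(P)(k_0) \cdot C_{R_u(P)}(K)$ lies in $k_0$ --- the $R_u(P)(k_0)$ factor contributes $k_0$-data, and the centralizer factor contributes nothing in the $c'(0)$-direction because $c'(0) \notin \textup{Lie}(C_{R_u(P)}(K))$. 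We carry this out in coordinates in each case, which completes the proof.
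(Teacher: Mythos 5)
Your overall strategy coincides with the paper's: the published proof is a two-line deferral to \cite[Sec.~4]{Uchiyama-Separability-JAlgebra}, and the argument there is exactly your reduction --- $H=c(b)Kc(b)^{-1}$ is $k_0$-defined because the conjugated generators depend only on $b^2=a$ (in Case 4 of Section 3 one has $c(b)q_2c(b)^{-1}=q_2\epsilon_{21}(b^2)$), $H$ is $G$-cr because it is $G$-conjugate to $K$, and failure of $G$-complete reducibility over $k_0$ is tested inside the $k_0$-defined parabolic $P$ via the observation that the Levi subgroups of $P$ containing $H$ are precisely the $uLu^{-1}$ with $u\in c(b)\,C_{R_u(P)}(K)$, so everything comes down to showing this coset contains no $k_0$-point of $R_u(P)$.

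The one place where your write-up is not sound as stated is the justification of that decisive step. You assert that every element of $R_u(P)(k_0)\cdot C_{R_u(P)}(K)$ has $k_0$-rational component along $c'(0)$ because ``the centralizer factor contributes nothing in the $c'(0)$-direction, since $c'(0)\notin \textup{Lie}(C_{R_u(P)}(K))$.'' That inference is invalid: non-membership of a vector in a subspace does not make the projection of the subspace onto that vector zero. In Case 4, $c'(0)=e_7+e_8$, and the shape of centralizing elements given by Proposition~\ref{positive proposition} allows an arbitrary common coordinate $b'$ on the orbit $\{7,8\}$; what it imposes is the linear relation $a'+b'+c'+d'=0$ tying $b'$ to the coordinates on the other $K$-orbits. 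The correct argument is: if $c(b)=uw$ with $u\in R_u(P)(k_0)$ and $w\in C_{R_u(P)}(K)$, then since $c(b)$ has zero coordinates on the orbits $O_1$, $O_9$, $O_{15}$ and the coordinates below the centre $U_{21}$ add under multiplication, $a',c',d'$ must lie in $k_0$, whence $b'=a'+c'+d'\in k_0$ and then $b=u_7+b'\in k_0$, contradicting $b\notin k_0$. Equivalently, you need a $k_0$-defined linear functional vanishing on \emph{all} of $\textup{Lie}(C_{R_u(P)}(K))$ and nonzero on $c'(0)$, and you must separately check that the group $C_{R_u(P)}(K)$ (not just its Lie algebra) lies in its kernel and that the functional is additive on $R_u(P)$; the first point is exactly what Proposition~\ref{positive proposition} supplies and is not a formal consequence of $c'(0)\notin\textup{Lie}(C_{R_u(P)}(K))$, and the second uses that $R_u(P)$ is two-step nilpotent with $Z(R_u(P))\subseteq C_{R_u(P)}(K)$. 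With that repair your proof goes through and agrees with the paper's intended argument.
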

\begin{proof}
Use the same $H=v(a) K v(a)^{-1}$ as in the proof of Theorem~\ref{main} with $v:=v(a)$ for $a\in k_0\backslash k_0^2$. Then a similar method to~\cite[Sec.~4]{Uchiyama-Separability-JAlgebra} shows that subgroups $H$ have the desired properties. The crucial thing here is the existence of a $1$-dimensional curve $C$ such that $T_1(C)$ is contained in $\mathfrak{c}_{{\textup Lie}(R_u(P))}(K)$ but not contained in $\textup{Lie}(C_{R_u(P)}(K))$ (see~\cite[Sec.~4]{Uchiyama-Separability-JAlgebra} for details).
\end{proof}
\begin{rem}
Let $k_0$ and $G=E_6$ be as in Theorem~\ref{rationality}. Based on the construction of the $E_6$ examples in this paper, we found the first examples of nonabelian $k_0$-defined subgroups $H$ of $G$ such that $H$ is $G$-cr over $k_0$ but not $G$-cr; see~\cite{Uchiyama-Nonperfect-pre}. Note that $G$-complete reducibility over $k_0$ is invariant under central isogenies~\cite[Sec.~2]{Uchiyama-Nonperfect-pre}.
\end{rem}  

As the second application, we consider a problem concerning conjugacy classes. Given $n\in {\mathbb N}$, we let $G$ act on $G^n$ by simultaneous conjugation:
\begin{equation*}
g\cdot(g_1, g_2, \ldots, g_n) = (g g_1 g^{-1}, g g_2 g^{-1}, \ldots, g g_n g^{-1}). 
\end{equation*}
In \cite{Slodowy-book}, Slodowy proved the following result, applying Richardson's tangent space argument~\cite[Sec.~3]{Richardson-Conjugacy-Ann},~\cite[Lem.~3.1]{Richardson-orbits-BullAustralian}. 
\begin{prop}\label{conjugacy}
Let $M$ be a reductive subgroup of a reductive algebraic group $G$ defined over an algebraically closed field $k$. Let $N\in {\mathbb N}$, let $(m_1, \ldots, m_N)\in M^N$ and let $H$ be the subgroup of $M$ generated by $m_1, \ldots, m_N$. Suppose that $(G, M)$ is a reductive pair and that $H$ is separable in $G$. Then the intersection $G\cdot (m_1, \ldots, m_N)\cap M^N$ is a finite union of $M$-conjugacy classes. 
\end{prop}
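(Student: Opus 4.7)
Write $\mathbf{m} := (m_1, \ldots, m_N)$ and $X := G \cdot \mathbf{m} \cap M^N$; we must show $X$ is a finite union of $M$-orbits. The plan is Richardson's tangent space argument: use the separability of $H$ in $G$ together with the reductive pair decomposition to force the tangent space to $X$ at every point to coincide with the tangent space of the $M$-orbit through that point. Once every $M$-orbit in $X$ is open in $X$, Noetherianity yields finiteness.

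The differential of the orbit map $G \to G^N$, $g \mapsto g \cdot \mathbf{m}$, sends $Y \in \mathfrak{g}$ to $(Y - \mathrm{Ad}(m_i)Y)_i$ with kernel $\mathfrak{c}_{\mathfrak{g}}(H)$, so its image
\begin{equation*}
V_G := \{(Y - \mathrm{Ad}(m_i)Y)_{i=1}^{N} : Y \in \mathfrak{g}\} \subseteq \mathfrak{g}^N
\end{equation*}
has dimension $\dim G - \dim \mathfrak{c}_{\mathfrak{g}}(H)$. Separability of $H$ in $G$ gives $\dim \mathfrak{c}_{\mathfrak{g}}(H) = \dim C_G(H)$, which matches $\dim V_G$ with $\dim (G \cdot \mathbf{m})$ and forces $T_{\mathbf{m}}(G \cdot \mathbf{m}) = V_G$. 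Define $V_M$ analogously with $\mathfrak{g}$ replaced by $\mathfrak{m}$; trivially $V_M \subseteq T_{\mathbf{m}}(M \cdot \mathbf{m})$. The reductive pair hypothesis supplies an $\mathrm{Ad}(M)$-stable, and hence $\mathrm{Ad}(m_i)$-stable, decomposition $\mathfrak{g} = \mathfrak{m} \oplus \mathfrak{n}$. Splitting $Y = Y_{\mathfrak{m}} + Y_{\mathfrak{n}}$ shows that a tuple in $V_G$ lies in $\mathfrak{m}^N$ iff its $\mathfrak{n}$-summand vanishes, giving $V_G \cap \mathfrak{m}^N = V_M$.

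Combining,
\begin{equation*}
T_{\mathbf{m}}(M \cdot \mathbf{m}) \subseteq T_{\mathbf{m}} X \subseteq T_{\mathbf{m}}(G \cdot \mathbf{m}) \cap \mathfrak{m}^N = V_M \subseteq T_{\mathbf{m}}(M \cdot \mathbf{m}),
\end{equation*}
so all inclusions are equalities. Since $M \cdot \mathbf{m} \subseteq X$ is irreducible of dimension $\dim T_{\mathbf{m}} X$, the variety $X$ is smooth at $\mathbf{m}$ and $M \cdot \mathbf{m}$ is dense open in the unique component of $X$ through $\mathbf{m}$. Separability being preserved under $G$-conjugation, the same argument applies at every $\mathbf{m}' \in X$. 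Hence $X$ is smooth everywhere; its irreducible components are pairwise disjoint and each open in $X$; and within any component two distinct $M$-orbits would both be dense open, forcing each component to be a single $M$-orbit. Noetherianity bounds the number of components, and hence of $M$-orbits, to be finite.

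The main obstacle is the tangent-space identification $T_{\mathbf{m}}(G \cdot \mathbf{m}) = V_G$, which is exactly where separability enters essentially; once this is in hand, the reductive pair condition handles the intersection with $\mathfrak{m}^N$ almost formally via the $\mathrm{Ad}(M)$-equivariant complement $\mathfrak{n}$. All remaining steps are standard dimension-and-smoothness bookkeeping.
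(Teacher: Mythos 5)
Your argument is correct and is precisely the Richardson--Slodowy tangent space argument that the paper itself invokes for this proposition (the paper gives no proof, only the citation to Slodowy's book and Richardson's papers): separability identifies $T_{\mathbf{m}}(G\cdot\mathbf{m})$ with the image $V_G$ of $Y\mapsto (Y-\mathrm{Ad}(m_i)Y)_i$, the $\mathrm{Ad}(M)$-stable complement from the reductive pair gives $V_G\cap\mathfrak{m}^N=V_M$, and the orbit-openness bookkeeping finishes. No discrepancy with the intended proof.
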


Proposition~\ref{conjugacy} has many consequences; see~\cite{Bate-geometric-Inventione}, \cite{Slodowy-book}, and \cite[Sec.~3]{Vinberg-invariants-JLT} for example. Here is our main result on conjugacy classes:

\begin{thm}\label{conjugacy-counterexample}
Let $G$ be a simple algebraic group of type $E_6$ defined over an algebraically closed $k$ of characteristic $p=2$. Let $M$ be the subsystem subgroup of type $A_5 A_1$. Then there exists $N\in \mathbb{N}$ and a tuple $\mathbf{m}\in M^N$ such that $G\cdot \bold{m} \cap M^N$ is an infinite union of $M$-conjugacy classes. 
\end{thm}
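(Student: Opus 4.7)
The plan is to take $\mathbf{m}$ to be a generating tuple for the subgroup $K$ constructed in Theorem~\ref{main} for $G=E_6$, $M=A_5A_1$, and to exploit the one-dimensional curve $C\subseteq R_u(P)$ witnessing non-separability to produce an infinite family of pairwise non-$M$-conjugate tuples inside $G\cdot\mathbf{m}\cap M^N$. This is the geometric contrapositive of Slodowy's Proposition~\ref{conjugacy}: since $H=vKv^{-1}$ is $G$-cr but not $M$-cr and $(G,M)$ is a reductive pair, Proposition~\ref{reductive-pair} forces $H$ to be non-separable in $G$, and this failure of separability can be turned into the required infinite collection of $M$-orbits.

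First I would retrieve from the proof of Theorem~\ref{main} the finite $G$-cr subgroup $K\subseteq L\subseteq M$, the parabolic $P\supseteq L$, and the curve $C=\{v(a):a\in k\}\subseteq R_u(P)$ with $v(0)=e$, $T_1(C)\subseteq\mathfrak{c}_{\mathrm{Lie}\,R_u(P)}(K)$, and $T_1(C)\not\subseteq\mathrm{Lie}\,C_{R_u(P)}(K)$. Fix generators $k_1,\ldots,k_N$ of $K$ and set $\mathbf{m}:=(k_1,\ldots,k_N)\in M^N$. The first real task is to verify $v(a)Kv(a)^{-1}\subseteq M$ for every $a\in k$, so that $\mathbf{m}(a):=v(a)\mathbf{m}v(a)^{-1}$ lies in $G\cdot\mathbf{m}\cap M^N$ for every $a$. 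The tangent condition together with the reductive pair splitting $\mathfrak{g}=\mathfrak{m}\oplus\mathfrak{n}$ shows that $T_1(C)$ lies in the tangent space at $e$ of the transporter $\{u\in R_u(P):uKu^{-1}\subseteq M\}$, and then the Chevalley commutator formula in characteristic $2$---in direct parallel with the computations of Sections~3--5---is used to integrate this infinitesimal inclusion to the full curve $C$.

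Next, suppose for contradiction that $G\cdot\mathbf{m}\cap M^N$ is a finite union of $M$-orbits. Since $a\mapsto\mathbf{m}(a)$ is a morphism from the irreducible affine line into this intersection, its image lies generically in a single $M$-orbit $\mathcal{O}$; because $K$ is $M$-cr (it sits in the Levi $L$ of a parabolic of $M$), the orbit $M\cdot\mathbf{m}$ is $M$-closed, and the fact that $\mathbf{m}(0)=\mathbf{m}$ forces $\mathcal{O}=M\cdot\mathbf{m}$. Hence for $a$ in a dense open $U\subseteq k$ there is $m(a)\in M$ with $m(a)v(a)\in C_G(K)$, so $C\subseteq\overline{M\cdot C_G(K)}$. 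Passing to tangent spaces at $e$ gives
\[
T_1(C)\;\subseteq\;\mathfrak{m}+\mathrm{Lie}\,C_G(K).
\]
Combining this with $T_1(C)\subseteq\mathrm{Lie}\,R_u(P)$, with $\mathrm{Lie}\,C_G(K)\subseteq\mathfrak{c}_\mathfrak{g}(K)=\mathfrak{c}_\mathfrak{m}(K)\oplus\mathfrak{c}_\mathfrak{n}(K)$, and with the root-space splitting of $\mathrm{Lie}\,R_u(P)$ along $\mathfrak{g}=\mathfrak{m}\oplus\mathfrak{n}$, a direct computation forces $T_1(C)\subseteq\mathrm{Lie}\,C_{R_u(P)}(K)$, contradicting the defining property of $C$.

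I expect the main obstacle to be the verification that all of $C$ (not merely $T_1(C)$) conjugates $K$ into $M$; this requires re-running the root-group computation of Theorem~\ref{main} along the generic point of $C$ using the Chevalley relations of $E_6$ in characteristic $2$. A secondary subtlety is the identification $\mathcal{O}=M\cdot\mathbf{m}$ in Step~3 via the $M$-complete reducibility of $K$; should this fail (so that a strictly larger orbit $\mathcal{O}$ dominates the family), one reruns the tangent-space comparison at a generic base point $v(a_0)\in C$ rather than at $e$. The final contradiction is essentially Richardson's tangent-space argument run backwards, converting the explicit non-separable direction produced by Theorem~\ref{main} into the desired infinite collection of $M$-conjugacy classes.
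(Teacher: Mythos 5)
Your strategy (push the curve $C$ into $M^N$ and derive a contradiction by a tangent--space computation) is genuinely different from the paper's, but it has two real gaps. First, the inference from $C\subseteq\overline{M\cdot C_G(K)}$ to $T_1(C)\subseteq\mathfrak{m}+\mathrm{Lie}\,C_G(K)$ is exactly the kind of step that fails in the non-separable setting you are working in: the tangent space at $1$ of the closure of the image of the multiplication map $M\times C_G(K)\to G$ is only known to \emph{contain} $\mathfrak{m}+\mathrm{Lie}\,C_G(K)$, and equality would require a smoothness/separability hypothesis that is precisely what is being destroyed here. Second, and more seriously, even if that inclusion were granted your ``direct computation'' yields no contradiction: writing $e_7+e_8=X_{\mathfrak m}+Y$ with $X_{\mathfrak m}\in\mathfrak m$ and $Y\in\mathrm{Lie}\,C_G(K)$ and projecting onto $\mathfrak n$ along $\mathfrak g=\mathfrak m\oplus\mathfrak n$ only shows that $e_7+e_8$ is the $\mathfrak n$-component of an element of $\mathrm{Lie}\,C_G(K)\subseteq\mathfrak c_{\mathfrak g}(K)$, hence lies in $\mathfrak c_{\mathfrak n}(K)$ --- which is true by construction (Proposition~\ref{Liealgebra}) and says nothing about membership in $\mathrm{Lie}\,C_{R_u(P)}(K)$, since the projection of the Lie algebra of a group centralizer need not be the Lie algebra of any centralizer. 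So the contradiction never materializes.

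The paper avoids both problems by arguing directly rather than by contradiction, using two devices that are absent from your proposal. (i) The tuple is \emph{not} a generating tuple of $K$: it is $\mathbf m=(q_1,q_2,t,z_1,\dots,z_n)$ with the $z_i$ drawn from $Z(R_u(P_\lambda))$ and chosen so that $C_{P_\lambda}(\langle K\cup F\rangle)=C_{R_u(P_\lambda)}(K_0)$. This pins the relevant centralizer down to the explicitly computed unipotent group of Proposition~\ref{positive proposition}, whence the elements $v(a)\cdot\mathbf m$ are pairwise non-$P_\lambda(M)$-conjugate by the same computation as in Proposition~\ref{non-M-cr}; with your tuple the relevant centralizer is all of $C_G(K)$, which you cannot control. (ii) One must still pass from infinitely many $P_\lambda(M)$-classes to infinitely many $M$-classes; the paper does this via Stewart's argument, using that $c_\lambda(\mathbf m)=(q_1,q_2,t)$ generates an $L_\lambda$-\emph{irreducible} subgroup. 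The Remark following the theorem stresses that this is exactly the step that fails for the analogous $E_7$ and $E_8$ subgroups (and that invalidated an earlier published claim), so it cannot be elided; your proposal does not engage with it.
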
 
\begin{proof}
We give a sketch with one example (Section~3, Case 4). Keep the same notation $P_\lambda$, $L_\lambda$, $K$, $q_1$, $q_2$, $t$ therein. Define $K_0:=\langle K, Z(R_u(P_\lambda))\rangle$. By a standard result, there exists a finite subset $F=\{z_1,\cdots, z_n\}$ of $Z(R_u(P_\lambda))$ such that $C_{P_\lambda}(\langle K\cup F \rangle)=C_{R_u(P_\lambda)}(K_0)$. Let $\textbf{m}:=(q_1, q_2, t, z_1,\cdots, z_n)$. Set $N:=n+3$. Then, a similar computation to that of~\cite[Lems.~5.1, 5.2]{Uchiyama-Separability-JAlgebra} shows that $G\cdot\mathbf{m}\cap P_\lambda(M)^N$ is an infinite union of $P_\lambda(M)$-conjugacy classes. (Here, the existence of a $1$-dimensional curve $C$ such that $T_1(C)$ is contained in $\mathfrak{c}_{{\textup Lie}(R_u(P))}(K)$ but not contained in $\textup{Lie}(C_{R_u(P)}(K))$ is crucial.) 

Let $c_\lambda:P_\lambda\rightarrow L_\lambda$ be the canonical projection. Then $c_\lambda((q_1, q_2, t, z_1,\cdots, z_n))=(q_1, q_2, t)$ and an easy computation by Magma shows that $K=\langle q_1, q_2, t \rangle$ is $L_\lambda$-ir. (This is easy to check since $L_\lambda$ is of type $A_5$.) Now the same argument as that in the proof of~\cite[Prop.~3.5.2]{Stewart-thesis} shows that $\mathbf{m}$ has the desired property. 
\end{proof}

\begin{rem}
The following was pointed out by the referee: our previous result~\cite[Lem.~5.3]{Uchiyama-Separability-JAlgebra} was wrong and a counterexample was given in~\cite[Ex.~4.22]{Martin-Lond-Kulshammer-Arx}. 
A direct computation shows that~\cite[Thm.~1.12]{Uchiyama-Separability-JAlgebra} (which depends on~\cite[Lem.~5.3]{Uchiyama-Separability-JAlgebra}) is also wrong. The point is that the subgroup $K$ there is not $L_\lambda$-ir, thus the second part of the proof of Theorem~\ref{conjugacy-counterexample} does not go through. Likewise using subgroups $K$ of $G$ of type $E_7$ and $E_8$ in Sections 4 and 5 we can find tuples $\textbf{m}$ for $G$ of type $E_7$ and $E_8$ such that $G\cdot\mathbf{m}\cap P_\lambda(M)^N$ is an infinite union of $P_\lambda(M)$-conjugacy classes, but these subgroups $K$ are not $L_\lambda$-ir. A direct computation shows that our method does not generate $\textbf{m}$ with the desired property in these cases.
\end{rem}

Now we discuss another application of our construction with a different flavor. Here, we consider a question of K\"ulshammer on representations of finite groups in reductive algebraic groups. Let $\Gamma$ be a finite group. By a representation of $\Gamma$ in a reductive algebraic group $G$, we mean a homomorphism from $\Gamma$ to $G$. We write $\textup{Hom}(\Gamma, G)$ for the set of representations $\rho$ of $\Gamma$ in $G$. The group $G$ acts on $\textup{Hom}(\Gamma, G)$ by conjugation. Let $\Gamma_p$ be a Sylow $p$-subgroup of $G$. In \cite[Sec.~2]{Kulshammer-Donovan-Israel}, K\"ulshammer asked: 

\begin{question}\label{KulshammerQ}
Let $G$ be a reductive algebraic group defined over an algebraically closed field of characteristic $p$. Let $\rho_p \in \textup{Hom}(\Gamma_p, G)$. Then are there only finitely many representations $\rho \in \textup{Hom}(\Gamma, G)$ such that $\rho\mid_{\Gamma_p}$ is $G$-conjugate to $\rho_p$? 
\end{question}

In~\cite{Bate-QuestionOfKulshammer}, Bate et al. presented an example where $p=2, G=G_2$ and $G$ has a finite subgroup $\Gamma$ with Sylow $2$-subgroup $\Gamma_2$ such that $\Gamma$ has an infinite family of pairwise non-conjugate representations $\rho$ whose restrictions to $\Gamma_2$ are all conjugate. In this paper, we present another example which answers Question~\ref{KulshammerQ} negatively:

\begin{thm}\label{KulshammerA}
Let $G$ be a simple simply-connected algebraic group of type $E_6$ defined over an algebraically closed field $k$ of characteristic $p=2$.  Then there exist a finite group $\Gamma$ with a Sylow $2$-subgroup $\Gamma_2$ and representations $\rho_a\in \textup{Hom}(\Gamma, G)$ for $a\in k$ such that $\rho_a$ is not conjugate to $\rho_b$ for $a\neq b$ but the restrictions $\rho_a\mid_{\Gamma_2}$ are pairwise conjugate for all $a\in k$. 
\end{thm}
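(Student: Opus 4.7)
The plan is to adapt the template of \cite{Bate-QuestionOfKulshammer}, where an analogous answer for $p=2$ and $G=G_2$ is given, now using the $E_6$ subgroup $K$ and the one-dimensional curve $C\subseteq R_u(P_\lambda)$ constructed in Section~3 (Case~4) in place of the $G_2$ input. I take $\Gamma$ to be the abstract finite group $K=\langle q_1,q_2,t\rangle$ (the same $K$ used in the proof of Theorem~\ref{conjugacy-counterexample}), and let $\Gamma_2$ be a Sylow $2$-subgroup of $K$. Because every finite-order element of a torus in characteristic $2$ has odd order, $t$ contributes no $2$-torsion to $K$, so $\Gamma_2$ is a $2$-subgroup of $\langle q_1,q_2\rangle$ and in particular does not contain $t$.

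For each $a\in k$, I construct $\rho_a\in\textup{Hom}(\Gamma,G)$ using $v(a)\in C$. The defining property of $C$ --- namely $T_1(C)\subseteq\mathfrak{c}_{\textup{Lie}(R_u(P_\lambda))}(K)$ but $T_1(C)\not\subseteq\textup{Lie}(C_{R_u(P_\lambda)}(K))$ --- plays two distinct roles. First, the inclusion $T_1(C)\subseteq\mathfrak{c}_{\textup{Lie}(R_u(P_\lambda))}(K)$ ensures that the required commutation relations between $v(a)$ and the generators of $K$ hold at the Lie-algebra level, which I will leverage to make $\rho_a$ a well-defined homomorphism for every $a\in k$. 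Second, $\rho_a$ is arranged so that its restriction to $\Gamma_2$ is the inclusion $\Gamma_2\hookrightarrow G$ up to a common $G$-conjugacy --- in particular, $\rho_a|_{\Gamma_2}$ and $\rho_b|_{\Gamma_2}$ are pairwise $G$-conjugate essentially by construction --- while the image $\rho_a(t)$ varies genuinely with $a$ along a curve in the relevant coset of $C_G(\Gamma_2)$ in $N_G(\Gamma_2)$.

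The main step, and the chief obstacle, is to prove that the full representations $\rho_a$ and $\rho_b$ are not $G$-conjugate for $a\neq b$. Any conjugating element $g\in G$ would necessarily lie in the centralizer of the common image of $\Gamma_2$ and must send $\rho_a(t)$ to $\rho_b(t)$. It is precisely the failure of $T_1(C)$ to lie in $\textup{Lie}(C_{R_u(P_\lambda)}(K))$ --- the second, non-separability, half of the defining property of $C$ --- that produces an honest one-parameter family of $C_G(\Gamma_2)$-orbits among the admissible images of $t$. Translating this non-separability phenomenon into the desired non-conjugacy statement is the delicate part, and will closely parallel the argument sketched for Theorem~\ref{conjugacy-counterexample} and, underneath, \cite[Lems.~5.1, 5.2]{Uchiyama-Separability-JAlgebra}; it will depend on the explicit \textsf{Magma} root-system data of Section~3 to pin down $C_G(\Gamma_2)$ and its orbit structure on the relevant coset.
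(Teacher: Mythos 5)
There is a genuine gap, and it is fatal to the construction as you have set it up. If you take $\Gamma$ to be the abstract group $K=\langle q_1,q_2,t\rangle$ from Case~4 and build $\rho_a$ from $v(a)\in C$, then every natural choice of $\rho_a$ (either $\rho_a=\mathrm{Int}(v(a))\circ\iota$, or the twist $q_1\mapsto q_1$, $t\mapsto t$, $q_2\mapsto q_2\epsilon_{21}(a)$) makes \emph{all} of the $\rho_a$ globally $G$-conjugate to one another: the element $v(\sqrt a)=\epsilon_7(\sqrt a)\epsilon_8(\sqrt a)$ centralizes $q_1$ and $t$ and sends $q_2$ to $q_2\epsilon_{21}(a)$, so it conjugates $\rho_0$ to $\rho_a$ as a representation of the whole of $\Gamma$, not just of $\Gamma_2$. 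The curve $C$ by itself cannot separate the $\rho_a$; the non-separability of $K$ is what makes the $v(a)$-conjugates fail to be $R_u(P_\lambda(M))$-conjugate (the complete-reducibility statement), but it says nothing against $G$-conjugacy. Your framing is also reversed: conjugation by points of $C$ fixes the odd-order generators and twists the involution $q_2$, so what varies with $a$ is the image of an element \emph{of} the Sylow $2$-subgroup, not the image of $t$.

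The paper's proof repairs exactly this in two ways, neither of which appears in your proposal. First, $\Gamma$ is enlarged on the odd side: instead of the single torus element $t$, one adjoins $t_i=\alpha^\vee(c),\dots,\epsilon^\vee(c)$ ($c^3=1$, $c\neq 1$), giving a group $F\cong H'$ of order $2\cdot 3^6$ whose odd part has image with centralizer exactly $C_G(t_1,\dots,t_5)=TG_{21}$; this centralizer does not contain $v(\sqrt a)$, which kills the trivial conjugacy above while leaving the restrictions to the Sylow $2$-subgroup conjugate via $u(\sqrt a)=\epsilon_7(\sqrt a)\epsilon_8(\sqrt a)$. Second, $\Gamma$ is enlarged on the even side by a central involution $z$ with $\rho_a(z)=\epsilon_{21}(1)\in Z(R_u(P))$, so that $\Gamma_2=\langle s_2,z\rangle\cong C_2\times C_2$; this is needed because after reducing a putative conjugating element to $G_{21}\cong A_1$, the torus of $G_{21}$ could still carry $\epsilon_{21}(a)$ to $\epsilon_{21}(b)$, and it is the requirement that $\epsilon_{21}(1)$ be fixed that forces the conjugator into the unipotent centralizer and yields $a=b$. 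Without both enlargements the non-conjugacy claim for $a\neq b$ is simply false, so the ``delicate part'' you defer is not a technical translation of non-separability but a redesign of $\Gamma$ itself.
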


Note that the example of Theorem~\ref{KulshammerA} is derived from Case 4 in the proof of Theorem~\ref{main}. We also present an example giving a negative answer to Question~\ref{KulshammerQ} for a non-connected reductive $G$ (this is much easier than the connected case):

\begin{thm}\label{KulshammerB}
Let $k$ be an algebraically closed field of characteristic $2$. Let $G:=SL_3(k)\rtimes \langle \sigma \rangle$ where $\sigma$ is the nontrivial graph automorphism of $SL_3(k)$. Let $d\geq 3$ be odd. Let $D_{2d}$ be the dihedral group of order $2d$. Let $$\Gamma:=D_{2d}\times C_2 =\langle r,s,z\mid r^d=s^2=z^2=1, s r s^{-1} = r^{-1}, [r,z]=[s,z]=1\rangle.$$ Let $\Gamma_2=\langle s,z \rangle$ (a Sylow $2$-subgroup of $\Gamma$). Then there exist representations $\rho_a\in \textup{Hom}(\Gamma, G)$ for $a\in k$ such that 
$\rho_a$ is not conjugate to $\rho_b$ for $a\neq b$ but restrictions $\rho_a\mid_{\Gamma_2}$ are pairwise conjugate for all $a\in k$.
\end{thm}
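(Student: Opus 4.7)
The plan is to fix $\rho_a(s)$ and $\rho_a(z)$ independent of $a$---so that the restrictions $\rho_a|_{\Gamma_2}$ are literally equal, hence pairwise $G$-conjugate---and let $\rho_a(r)$ trace out a one-parameter family of order-$d$ elements in $SL_3(k)$ occupying infinitely many orbits of the centralizer of $\rho_a(\Gamma_2)$. Since $\gcd(d,p)=1$, every element of order $d$ in $G$ is semisimple and automatically lies in $SL_3(k)$, so the construction can be carried out inside $SL_3(k)$.

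I model $\sigma$ as $\sigma(g) = w_0(g^T)^{-1}w_0^{-1}$ with $w_0$ the longest permutation matrix in $S_3$. Fixing $\zeta\in k^\times$ of order $d$, I take
\[
t_0 := \mathrm{diag}(\zeta,\zeta,\zeta^{-2}), \qquad s_0 := (w_0,\sigma) \in G, \qquad z_0 := \begin{pmatrix} 0 & 1 \\ 1 & 0 \end{pmatrix} \oplus 1 \in SL_3(k),
\]
and verify by direct matrix computation (using $w_0^T=w_0$, $w_0^2=I$, and that the upper-left $2\times 2$ block of $t_0$ is the scalar $\zeta I_2$) that $s_0^2 = z_0^2 = 1$, $s_0 t_0 s_0^{-1} = t_0^{-1}$, and $[s_0,z_0] = [t_0,z_0] = 1$. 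Thus $\rho_0(s):=s_0$, $\rho_0(z):=z_0$, $\rho_0(r):=t_0$ defines a homomorphism $\rho_0\colon\Gamma\to G$.

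To vary, consider the variety
\[
V := \{x \in SL_3(k) : x^d = 1,\ [x,z_0]=1,\ s_0 x s_0^{-1} = x^{-1}\},
\]
on which $C := C_G(\langle s_0,z_0\rangle)$ acts by conjugation. Any $x\in V$ yields a homomorphism $\rho\colon\Gamma\to G$ with $\rho|_{\Gamma_2} = \rho_0|_{\Gamma_2}$, and two such representations are $G$-conjugate iff their $x$-values lie in a common $C$-orbit. I then exhibit a $1$-parameter subgroup $U\cong\mathbb{G}_a$ inside $C_{SL_3}(z_0)$ satisfying (i) $s_0 u s_0^{-1} = u$ for $u\in U$, so that $u t_0 u^{-1}$ remains $s_0$-inverted, and (ii) $U \not\subset C\cdot C_{SL_3}(t_0)$, so that distinct $u\in U$ conjugate $t_0$ to elements of $V$ lying in distinct $C$-orbits. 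Setting $x_a := u_a t_0 u_a^{-1}$ for $u_a\in U$ the parameter-$a$ element produces the required family, and the corresponding $\rho_a$ are pairwise non-$G$-conjugate.

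The main obstacle is producing $U$ with both properties. The dimension of $C_{SL_3}(z_0)$ is $4$, while $C_{SL_3}(t_0)\cap C_{SL_3}(z_0)\cong\mathbb{G}_m\times(\text{stabilizer of }z_0\text{ in }GL_2)$ and $C$ (contained in $SO_3$ after intersecting with $SL_3$) both have strictly smaller dimension, leaving enough room; the explicit matrix forms of $s_0$ and $z_0$ reduce the verification to a short computation. Unlike the connected case of Theorem~\ref{KulshammerA}, this does not invoke the non-separability machinery of the rest of the paper: the presence of the outer involution $\sigma$ inside $G$ itself already supplies the required flexibility, which is why the disconnected case is ``much easier.''
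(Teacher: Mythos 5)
Your overall frame is the right one---the paper's construction can indeed be recast as fixing $\rho_0|_{\Gamma_2}$ and conjugating the semisimple element $\rho(r)$ along a curve of unipotent elements---but the two conditions you impose on your one-parameter subgroup $U$ are mutually contradictory, and this kills the construction. If $U\subset C_{SL_3}(z_0)$ and $s_0us_0^{-1}=u$ for all $u\in U$, then every $u\in U$ centralizes both $s_0$ and $z_0$, hence $U\subset C=C_G(\langle s_0,z_0\rangle)$. Consequently $x_a=u_at_0u_a^{-1}$ lies in the $C$-orbit of $t_0$ for every $a$, i.e.\ all your $\rho_a$ are $C$-conjugate to $\rho_0$, and condition (ii) ($U\not\subset C\cdot C_{SL_3}(t_0)$) can never hold. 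The correct requirement is strictly weaker than (i): you only need the commutators $[s_0,u]$ and $[z_0,u]$ to land in $C_G(t_0)$, while $u$ itself fails to centralize $s_0$. This is exactly what the paper's conjugating element achieves: with $t=(\alpha-\beta)^{\vee}(c)$ and $u(\sqrt a)=\epsilon_\alpha(\sqrt a)\epsilon_\beta(\sqrt a)$ one has $u(\sqrt a)\cdot\sigma=\sigma\epsilon_{\alpha+\beta}(a)$, and the ``error term'' $\epsilon_{\alpha+\beta}(a)$ centralizes $t$ because $\langle\alpha+\beta,(\alpha-\beta)^{\vee}\rangle=0$, while it visibly does not vanish. (The paper parametrizes this by moving the error onto $\rho_a(s)=\sigma\epsilon_{\alpha+\beta}(a)$ and keeping $\rho_a(r)=t$ fixed, which is your picture conjugated by $u(\sqrt a)$.)

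The second gap is that even with the correct curve, pairwise non-conjugacy of the $\rho_a$ is not a dimension count; ``leaving enough room'' establishes nothing, and as noted above there is in fact no room at all under your hypotheses. The paper proves non-conjugacy by an explicit reduction: any $g$ with $g\cdot\rho_a=\rho_b$ must centralize $t$, one computes $C_G(t)=TG_{\alpha+\beta}$, the relation $g\cdot\rho_a(s)=\rho_b(s)$ then forces $g$ into the rank-one group $G_{\alpha+\beta}$, and inside that $A_1$ the pairs $(\sigma\epsilon_{\alpha+\beta}(a),\epsilon_{\alpha+\beta}(1))$ and $(\sigma\epsilon_{\alpha+\beta}(b),\epsilon_{\alpha+\beta}(1))$ are plainly non-conjugate for $a\neq b$. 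Some such argument is indispensable, because for $d$ prime to $p$ the images of $r$ are semisimple and fall into finitely many $G^{\circ}$-classes, so the infinitude of $C$-orbits is a genuinely delicate (non-separability-flavoured) phenomenon rather than something supplied for free by the outer involution.
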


Here is the structure of this paper. In Section 2, we set out the notation and give a few preliminary results. Then in Section 3, 4, 5, we present a list of $G$-cr but non $M$-cr subgroups for $G=E_6, E_7, E_8$ respectively. This proves Theorem~\ref{main}. Some details of our method will be explained in Section 3 using one of the examples for $G=E_6$. Finally in Section 6, we give proofs of Theorems~\ref{KulshammerA} and~\ref{KulshammerB}.

\section{Preliminaries}
Throughout, we denote by $k$ an algebraically closed field of positive characteristic $p$. Let $G$ be an algebraic group defined over $k$. We write $R_u(G)$ for the \emph{unipotent radical} of $G$, and $G$ is called (possibly non-connected) \emph{reductive} if $R_u(G)=\{1\}$. In particular, $G$ is \emph{simple} as an algebraic group if $G$ is connected and all proper normal subgroups of $G$ are finite. In this paper, when a subgroup $H$ of $G$ acts on $G$, we assume $H$ acts on $G$ by inner automorphisms. We write $C_G(H)$ and $\mathfrak{c}_{\mathfrak{g}}(H)$ for the global and the infinitesimal centralizers of $H$ in $G$ and $\mathfrak{g}$ respectively. We write $X(G)$ and $Y(G)$ for the set of characters and cocharacters of $G$ respectively. 

Let $G$ be a connected reductive algebraic group. Fix a maximal torus $T$ of $G$. Let $\Psi(G,T)$ denote the set of roots of $G$ with respect to $T$. We sometimes write $\Psi(G)$ for $\Psi(G,T)$. Let $\zeta\in\Psi(G)$. We write $U_\zeta$ for the corresponding root subgroup of $G$ and $\mathfrak{u}_\zeta$ for the Lie algebra of $U_\zeta$. We define $G_\zeta := \langle U_\zeta, U_{-\zeta} \rangle$. Let $\zeta, \xi \in \Psi(G)$. Let $\xi^{\vee}$ be the coroot corresponding to $\xi$. Then $\zeta\circ\xi^{\vee}:k^{*}\rightarrow k^{*}$ is a homomorphism such that $(\zeta\circ\xi^{\vee})(a) = a^n$ for some $n\in\mathbb{Z}$. We define $\langle \zeta, \xi^{\vee} \rangle := n$.
Let $s_\xi$ denote the reflection corresponding to $\xi$ in the Weyl group of $G$. Each $s_\xi$ acts on the set of roots $\Psi(G)$ by the following formula~\cite[Lem.~7.1.8]{Springer-book}:
$
s_\xi\cdot\zeta = \zeta - \langle \zeta, \xi^{\vee} \rangle \xi. 
$
\noindent By \cite[Prop.~6.4.2, Lem.~7.2.1]{Carter-simple-book} we can choose homomorphisms $\epsilon_\zeta : k \rightarrow U_\zeta$  so that 
$
n_\xi \epsilon_\zeta(a) n_\xi^{-1}= \epsilon_{s_\xi\cdot\zeta}(\pm a)
            \text{ where } n_\xi = \epsilon_\xi(1)\epsilon_{-\xi}(-1)\epsilon_{\xi}(1).  \label{n-action on group}
$
We define $e_\zeta:=\epsilon_\zeta'(0)$.

We recall \cite[Sec.~2.1--2.3]{Richardson-conjugacy-Duke} for the characterization of a parabolic subgroup $P$ of $G$, a Levi subgroup $L$ of $P$, and the unipotent radical $R_u(P)$ of $P$ in terms of a cocharacter of $G$ and state a result from GIT (Proposition~\ref{GIT}).  

\begin{defn}
Let $X$ be an affine variety. Let $\phi : k^*\rightarrow X$ be a morphism of algebraic varieties. We say that $\displaystyle\lim_{a\rightarrow 0}\phi(a)$ exists if there exists a morphism $\hat\phi: k\rightarrow X$ (necessarily unique) whose restriction to $k^{*}$ is $\phi$. If this limit exists, we set $\displaystyle\lim_{a\rightarrow 0}\phi(a) = \hat\phi(0)$.
\end{defn}

\begin{defn}
Let $\lambda$ be a cocharacter of $G$. Define
$
P_\lambda := \{ g\in G \mid \displaystyle\lim_{a\rightarrow 0}\lambda(a)g\lambda(a)^{-1} \text{ exists}\}, $\\
$L_\lambda := \{ g\in G \mid \displaystyle\lim_{a\rightarrow 0}\lambda(a)g\lambda(a)^{-1} = g\}, \,
R_u(P_\lambda) := \{ g\in G \mid  \displaystyle\lim_{a\rightarrow0}\lambda(a)g\lambda(a)^{-1} = 1\}. 
$
\end{defn}
Note that $P_\lambda$ is a parabolic subgroup of $G$, $L_\lambda$ is a Levi subgroup of $P_\lambda$, and $R_u(P_\lambda)$ is the unipotent radical of $P_\lambda$~\cite[Sec.~2.1-2.3]{Richardson-conjugacy-Duke}. By~\cite[Prop.~8.4.5]{Springer-book}, any parabolic subgroup $P$ of $G$, any Levi subgroup $L$ of $P$, and any unipotent radical $R_u(P)$ of $P$ can be expressed in this form. It is well known that $L_\lambda = C_G(\lambda(k^*))$. 

Let $M$ be a reductive subgroup of $G$. There is a natural inclusion $Y(M)\subseteq Y(G)$ of cocharacter groups. Let $\lambda\in Y(M)$. We write $P_\lambda(G)$ or just $P_\lambda$ for the parabolic subgroup of $G$ corresponding to $\lambda$, and $P_\lambda(M)$ for the parabolic subgroup of $M$ corresponding to $\lambda$. It is obvious that $P_\lambda(M) = P_\lambda(G)\cap M$ and $R_u(P_\lambda(M)) = R_u(P_\lambda(G))\cap M$. 

\begin{defn}\label{homomorphism}
Let $\lambda\in Y(G)$. Define a map $c_\lambda : P_\lambda \rightarrow L_\lambda$ by 
$
c_\lambda(g) := \displaystyle\lim_{a\rightarrow 0} \lambda(a) g \lambda(a)^{-1}.
$
\end{defn}
Note that the map $c_\lambda$ is the usual canonical projection from $P_\lambda$ to $L_\lambda \cong P_\lambda / R_u(P_\lambda)$.
Now we state a result from GIT (see~\cite[Lem.~2.17, Thm.~3.1]{Bate-geometric-Inventione},~\cite[Thm.~3.3]{Bate-uniform-TransAMS}).
\begin{prop}\label{GIT}
Let $H$ be a subgroup of $G$. Let $\lambda$ be a cocharacter of $G$ with $H\subseteq P_\lambda$. If $H$ is $G$-cr, there exists $v\in R_u(P_\lambda)$ such that $c_\lambda(h) = vhv^{-1}$ for every $h\in H$. 
\end{prop}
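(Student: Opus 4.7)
The plan is to derive Proposition~\ref{GIT} from the GIT characterization of $G$-complete reducibility developed by Bate-Martin-R\"ohrle; indeed, the statement is essentially quoted from~\cite{Bate-geometric-Inventione} and~\cite{Bate-uniform-TransAMS}, so the task is to sketch the structure of their argument.

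First, I would pass to an orbit-closure picture. Pick a (topological) generating tuple $(h_1,\ldots,h_n)$ for $H$, and let $G$ act diagonally on $G^n$ by simultaneous conjugation, so that the subgroup $H$ is represented by the point $\mathbf{h} := (h_1,\ldots,h_n) \in G^n$. The central translation, due to BMR, is that $H$ is $G$-completely reducible if and only if the $G$-orbit $G\cdot\mathbf{h}$ is Zariski closed in $G^n$.

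Second, I would use the cocharacter $\lambda$ to produce a limit point. Because $H \subseteq P_\lambda$, each $h_i \in P_\lambda$, so the limit $\mathbf{c} := \displaystyle\lim_{a\to 0} \lambda(a)\cdot\mathbf{h} = (c_\lambda(h_1),\ldots,c_\lambda(h_n))$ exists coordinate-wise and lies in $L_\lambda^n \subseteq G^n$. In particular $\mathbf{c}$ lies in $\overline{\lambda(k^*)\cdot\mathbf{h}} \subseteq \overline{G\cdot\mathbf{h}}$, and by the closedness from the previous step we obtain $\mathbf{c} \in G\cdot\mathbf{h}$. Thus there exists $g\in G$ with $g h_i g^{-1} = c_\lambda(h_i)$ for each $i$; equivalently, $g$ conjugates $H$ onto $c_\lambda(H)$.

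The main obstacle — and where the nontrivial GIT input enters — is refining this $g$ to an element $v \in R_u(P_\lambda)$. The strategy is to combine the Levi decomposition $P_\lambda = R_u(P_\lambda) \rtimes L_\lambda$ with a Kempf-Rousseau-Hesselink type analysis of optimal destabilizing cocharacters. Concretely, by applying $\lambda(a)$ to both sides of $g\mathbf{h}g^{-1} = \mathbf{c}$ and letting $a \to 0$ (using that $\mathbf{c}$ is $\lambda$-fixed while $\mathbf{h}$ lies in $P_\lambda^n$) one deduces that $\lim_{a \to 0}\lambda(a) g \lambda(a)^{-1}$ exists, forcing $g \in P_\lambda$. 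Writing $g = v\ell$ with $v \in R_u(P_\lambda)$ and $\ell \in L_\lambda$, and applying $c_\lambda$ to the defining equation, one checks that $\ell$ must centralize $\mathbf{c}$, and a further careful tracking of the Levi component permits one to absorb $\ell$ and produce a $v \in R_u(P_\lambda)$ with $v h_i v^{-1} = c_\lambda(h_i)$ for all $i$. This last refinement is exactly the content of~\cite[Lem.~2.17, Thm.~3.1]{Bate-geometric-Inventione} and~\cite[Thm.~3.3]{Bate-uniform-TransAMS}, and invoking those results finishes the proof.
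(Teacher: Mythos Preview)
The paper does not prove this proposition at all; it simply states it with citations to \cite[Lem.~2.17, Thm.~3.1]{Bate-geometric-Inventione} and \cite[Thm.~3.3]{Bate-uniform-TransAMS}. Your proposal---sketching the BMR closed-orbit argument and then invoking those same references for the hard step---is therefore fully consistent with the paper's treatment.

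One caution about your sketch of step~3: the heuristic that conjugating $g\mathbf{h}g^{-1}=\mathbf{c}$ by $\lambda(a)$ and letting $a\to 0$ forces $\lim_{a\to 0}\lambda(a)g\lambda(a)^{-1}$ to exist is not valid as written; knowing that $\lambda(a)\mathbf{h}\lambda(a)^{-1}\to\mathbf{c}$ and that $\mathbf{c}$ is $\lambda$-fixed does not by itself control the limiting behaviour of $\lambda(a)g\lambda(a)^{-1}$. You are right that this refinement is exactly where the nontrivial input from the cited results enters, so deferring to them is the correct move.

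It is worth noting that a short direct argument bypasses the GIT machinery entirely. Since $H$ is $G$-cr and $H\subseteq P_\lambda$, by definition $H$ lies in some Levi subgroup $L'$ of $P_\lambda$. All Levi subgroups of $P_\lambda$ are $R_u(P_\lambda)$-conjugate, so $L'=vL_\lambda v^{-1}$ for some $v\in R_u(P_\lambda)$, i.e.\ $v^{-1}hv\in L_\lambda$ for every $h\in H$. Since $c_\lambda$ is a homomorphism that kills $R_u(P_\lambda)$ and restricts to the identity on $L_\lambda$, we get $c_\lambda(h)=c_\lambda(v)\,c_\lambda(v^{-1}hv)\,c_\lambda(v)^{-1}=v^{-1}hv$; replacing $v$ by $v^{-1}\in R_u(P_\lambda)$ gives the statement.
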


\section{The $E_6$ examples}
 For the rest of the paper, we assume $k$ is an algebraically closed field of characteristic $2$. Let $G$ be a simple algebraic group of type $E_6$ defined over $k$. Without loss, we assume that $G$ is simply-connected. Fix a maximal torus $T$ of $G$. Pick a Borel subgroup $B$ of $G$ containing $T$.
Let $\Sigma = \{ \alpha,\beta,\gamma,\delta,\epsilon,\sigma \}$ be the set of simple roots of $G$ corresponding to $B$ and $T$. The next figure defines how each simple root of $G$ corresponds to each node in the Dynkin diagram of $E_6$.
\begin{figure}[h]
                \centering
                \scalebox{0.7}{
                \begin{tikzpicture}
                \draw (0,0) to (1,0);
                \draw (1,0) to (2,0);
                \draw (2,0) to (3,0);
                \draw (3,0) to (4,0);
                \draw (2,0) to (2,1);
                \fill (0,0) circle (1mm);
                \fill (1,0) circle (1mm);
                \fill (2,0) circle (1mm);
                \fill (3,0) circle (1mm);
                \fill (4,0) circle (1mm);
                \fill (2,1) circle (1mm);
                \draw[below] (0,-0.3) node{$\alpha$};
                \draw[below] (1,-0.3) node{$\beta$};
                \draw[below] (2,-0.3) node{$\gamma$};
                \draw[below] (3,-0.3) node{$\delta$};
                \draw[below] (4,-0.3) node{$\epsilon$};
                \draw[right] (2.3,1) node{$\sigma$};
                \end{tikzpicture}}
\end{figure}
We label the positive roots of $G$ as shown in Table \ref{E6} in the Appendix~\cite[Appendix, Table B]{Freudenthal-Lie-book}. Define
$L: = \langle T, G_{22},\cdots, G_{36} \rangle$, 
$P: = \langle L, U_{1},\cdots, U_{21} \rangle, W_L:=\langle n_\alpha, n_\beta, n_\gamma, n_\delta, n_\epsilon \rangle$.
Then $P$ is a parabolic subgroup of $G$, $L$ is a Levi subgroup of $P$, and $\Psi(R_u(P)) = \{ 1,\cdots,21 \}$. Let $M=\langle L, G_{21} \rangle$. Then $M$ is a subsystem subgroup of type $A_5A_1$, $(G,M)$ is a reductive pair, and $\Psi(M)=\{\pm 21,\cdots, \pm 36\}$. Note that $L$ is generated by $T$ and all root subgroups with $\sigma$-weight $0$, and $M$ is generated by $L$ and all root subgroups with  $\sigma$-weight $\pm 2$. Here, by the $\sigma$-weight of a root subgroup $U_\zeta$, we mean the $\sigma$-coefficient of $\zeta$.  

Using Magma, we found that there are $56$ subgroups of $W_L$ up to conjugacy, and $11$ of them act non-separably on $R_u(P)$. Table~\ref{E6examples} lists these $11$ subgroups $K'$, and also gives the choice of $t$ we use to give $K:=\langle K' \cup \{t\}\rangle$. Note that $[L,L]=SL_6$ since $G$ is simply-connected. We identify $n_\alpha$, $n_\beta$, $n_\gamma$, $n_\delta$, $n_\epsilon$ with $(1 2)$, $(2 3)$, $(3 4)$, $(4 5)$, $(5 6)$ in $S_6$. To illustrate our method, we look at Case $4$ closely.

\begin{table}[!h]
\centering
\scalebox{0.8}{
\begin{tabular}{|l|l|l|l|l|}
\hline
 case & generators of $K'$ & $|K'|$ & $t$ & $v(a)$ \\
\hline
1 & (1 5)(2 3)(4 6) & 2 & $(\alpha^{\vee}+\epsilon^{\vee})(b)$ & $\epsilon_{7}(a)\epsilon_{8}(a)$\\
2 & (1 5)(4 6), (1 4 5 6)(2 3) &4 & $\alpha^{\vee}(b)$ & $\epsilon_{10}(a)\epsilon_{13}(a)$\\
3 & (2 4)(3 6), (1 5)(2 6)(3 4)&4 & $(\alpha^{\vee}+\epsilon^{\vee})(b)$ & $\epsilon_{7}(a)\epsilon_{8}(a)$ \\
4 & (1 5)(2 3)(4 6), (1 4 2)(3 6 5)& 6& $(\alpha^{\vee}+\epsilon^{\vee})(b)$ & $\epsilon_{7}(a)\epsilon_{8}(a)$\\
5& (1 5)(2 6)(3 4), (1 4 2)(3 6 5)& 6& $(\alpha^{\vee}+\epsilon^{\vee})(b) $ & $\epsilon_{7}(a)\epsilon_{8}(a)$ \\
6& (4 6), (1 4)(2 3)(5 6), (1 5)(4 6)& 8& $\alpha^{\vee}(b)$ & $\epsilon_{10}(a)\epsilon_{13}(a)$\\
7& (1 5)(2 6)(3 4), (2 4)(3 6), (1 2 4)(3 5 6)& 12& $(\alpha^{\vee}+\epsilon^{\vee})(b)$ & $\epsilon_{7}(a)\epsilon_{8}(a)$\\
8& (1 4)(2 3)(5 6), (1 3 5)(2 4 6), (2 4 6)& 18 & $(\alpha^{\vee}+\beta^{\vee})(b)$ &$\epsilon_{11}(a)\epsilon_{12}(a)$\\
9& (1 4)(2 3)(5 6), (3 5)(4 6), (1 3 5), (2 4 6)& 36&  $(\alpha^{\vee}+\beta^{\vee})(b)$ &$\epsilon_{11}(a)\epsilon_{12}(a)$\\
10& (1 4 5 6)(2 3), (3 5)(4 6), (1 3 5), (2 4 6)& 36 & $(\alpha^{\vee}+\beta^{\vee})(b)$ &$\epsilon_{11}(a)\epsilon_{12}(a)$\\
11& (1 3), (1 4)(2 3)(5 6), (1 3)(4 6), (1 5 3), (2 6 4)& 72&  $(\alpha^{\vee}+\beta^{\vee})(b)$ &$\epsilon_{11}(a)\epsilon_{12}(a)$\\
\hline 
\end{tabular}
}
\caption{The $E_6$ examples}\label{E6examples}
\end{table}

\noindent $\bullet$ Case $4$: \\

Let $b\in k$ such that $b^3=1$ and $b\neq 1$. Define
\begin{alignat*}{2}
q_1 &:= n_\alpha n_\beta n_\gamma n_\beta n_\alpha n_\beta n_\gamma n_\beta n_\gamma n_\delta n_\epsilon n_\delta n_\gamma n_\epsilon, \; q_2 := n_\alpha n_\beta n_\gamma n_\delta n_\gamma n_\beta n_\alpha n_\beta n_\delta n_\epsilon n_\delta, \\
t &:=(\alpha^{\vee}+\epsilon^{\vee})(b), \;
K' := \langle q_1, q_2 \rangle, \;
K := \langle q_1, q_2, t \rangle.
\end{alignat*}  

It is easy to calculate how $W_L$ acts on $\Psi(R_u(P))$. Let $\pi: W_L \rightarrow \textup{ Sym }(\Psi(R_u(P)))\cong S_{21}$ be the corresponding homomorphism. Then we have 
\begin{alignat*}{2}
\pi(q_1) &= (1\; 5\; 4)(2\; 3\; 6)(9\; 12\; 10)(11\; 13\; 14)(15\; 16\; 17)(18\; 20\; 19),\\
\pi(q_2) &= (1\; 2)(3\; 4)(5\; 6)(7\; 8)(9\; 14)(10\; 11)(12\; 13)(15\; 18)(16\; 19)(17\; 20).
\end{alignat*}
The orbits of $\langle q_1, q_2\rangle$ are $O_1=\{1,2,3,4,5,6\}$, $O_7=\{7,8\}$, $O_9=\{9,10,11,12,13,14\}$, $O_{15}=\{15,16,17,18,19,20\}$, $O_{21}=\{21\}$. Since $t$ acts trivially on $e_7+e_8$, \cite[Lem.~2.8]{Uchiyama-Separability-JAlgebra} yields  
\begin{prop}\label{Liealgebra}
$e_{7}+e_{8}\in \mathfrak{c}_{\textup{Lie}\left(R_u(P)\right)}(K)$. 
\end{prop}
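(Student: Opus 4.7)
The plan is to check directly that every generator of $K=\langle q_1,q_2,t\rangle$ fixes the element $e_7+e_8\in\mathrm{Lie}(R_u(P))$; this immediately places $e_7+e_8$ in $\mathfrak{c}_{\mathrm{Lie}(R_u(P))}(K)$. The check splits into a Weyl-group part (for $q_1,q_2$) and a toral part (for $t$), and both are essentially computational once we exploit that the characteristic is $2$.

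For the Weyl-group generators, I would start from the formula $n_\xi\epsilon_\zeta(a)n_\xi^{-1}=\epsilon_{s_\xi\cdot\zeta}(\pm a)$ recorded in Section~2 and differentiate to get $\mathrm{Ad}(n_\xi)(e_\zeta)=\pm e_{s_\xi\cdot\zeta}$. Since $p=2$, every sign collapses to $1$, so the action of $W_L$ on the basis $\{e_\zeta\}_{\zeta\in\Psi(R_u(P))}$ factors through the permutation $\pi\colon W_L\to\mathrm{Sym}(\Psi(R_u(P)))$. From the displayed cycle decomposition of $\pi(q_1)$, neither $7$ nor $8$ occurs in any cycle, so $q_1$ fixes each of $e_7,e_8$ individually; and $(7\;8)$ is one of the $2$-cycles of $\pi(q_2)$, so $q_2$ swaps $e_7\leftrightarrow e_8$ and hence fixes $e_7+e_8$. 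This is precisely the mechanism of \cite[Lem.~2.8]{Uchiyama-Separability-JAlgebra}: in characteristic~$2$, the sum of the $e_\zeta$ over a $K'$-orbit in $\Psi(R_u(P))$ lies in $\mathfrak{c}_{\mathrm{Lie}(R_u(P))}(K')$, and $\{7,8\}$ is the orbit producing $e_7+e_8$.

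For the toral generator, recall that $t=(\alpha^\vee+\epsilon^\vee)(b)$ acts on a root vector $e_\zeta$ by the scalar $b^{\langle\zeta,\alpha^\vee+\epsilon^\vee\rangle}$. I would read off the coefficients of roots $7$ and $8$ from Table~\ref{E6} in the Appendix and evaluate the two pairings $\langle 7,\alpha^\vee+\epsilon^\vee\rangle$ and $\langle 8,\alpha^\vee+\epsilon^\vee\rangle$ using the $E_6$ Cartan matrix. Because $b^3=1$, showing that $t$ fixes both $e_7$ and $e_8$ is equivalent to verifying each pairing is divisible by $3$; this reduces to a short arithmetic check that does not require recomputing the full root table.

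Combining the two steps shows that $e_7+e_8$ is fixed by each generator of $K$, and therefore lies in $\mathfrak{c}_{\mathrm{Lie}(R_u(P))}(K)$. The only real step of substance is the toral computation; everything in the Weyl-group part is a direct read-off from the permutations $\pi(q_1),\pi(q_2)$ together with the disappearance of signs in characteristic~$2$, so I do not expect any obstacle beyond confirming the coroot pairings correctly.
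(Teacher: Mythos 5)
Your proposal is correct and follows essentially the same route as the paper: the paper's one-line proof invokes \cite[Lem.~2.8]{Uchiyama-Separability-JAlgebra} for exactly the orbit-sum mechanism you describe (in characteristic $2$ the signs in $\mathrm{Ad}(n_\xi)e_\zeta=\pm e_{s_\xi\cdot\zeta}$ vanish, so the sum over the $K'$-orbit $O_7=\{7,8\}$ is centralized by $q_1,q_2$), and then notes separately that $t$ acts trivially on $e_7+e_8$, which is your coroot-pairing check (both pairings are in fact $0$).
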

\begin{prop}\label{positive proposition}
Let $u\in C_{R_u(P_{\alpha\beta\gamma\delta\epsilon\eta})}(K)$. Then $u$ must have the form, 
\begin{equation*}
u=\prod_{i=1}^{6}\epsilon_i(a)\prod_{i=7}^{8}\epsilon_i(b)\prod_{i=9}^{14}\epsilon_i(c)\left(\prod_{i=15}^{20}
\epsilon_i(a+b+c)\right)\epsilon_{21}(a_{21}) \textup{ for some } a,b,c,a_{21}\in k.  
\end{equation*}  
\end{prop}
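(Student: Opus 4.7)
The plan is to write $u \in R_u(P)$ in canonical coordinates and then impose the three centralizer conditions (against $q_1$, $q_2$, and $t$) in turn, exploiting the $2$-step nilpotent structure of $R_u(P)$. Under the $\sigma$-grading, $R_u(P)$ has $20$ roots of level $1$ and only the highest root $\theta$ (labelled $21$) at level $2$, so $U_{21}$ is the centre of $R_u(P)$. I therefore fix an ordering of the level-$1$ root subgroups and write $u = \prod_{i=1}^{20}\epsilon_i(a_i)\cdot \epsilon_{21}(a_{21})$.

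Working first modulo the centre $U_{21}$, the quotient $R_u(P)/U_{21}$ is abelian, and the conjugation action of each $q_j$ on the level-$1$ coordinates is exactly the stated permutation (signs being trivial in characteristic $2$). Thus $\langle q_1, q_2\rangle$-invariance modulo $U_{21}$ collapses to orbit-constancy, giving parameters $a_1=\cdots=a_6 = a$, $a_7=a_8 = b$, $a_9=\cdots=a_{14} = c$, $a_{15}=\cdots=a_{20} = d$, with $a_{21}$ still free. This already accounts for the shape of the claimed form except for the relation $d = a+b+c$.

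The substantive step is lifting $q_j$-invariance from the abelianization back to $R_u(P)$. Conjugating the canonical form by $q_j$ permutes its level-$1$ factors, and restoring them to canonical order introduces corrections in $U_{21}$ via the commutator relations $[\epsilon_\zeta(x), \epsilon_\xi(y)] = \epsilon_\theta(\pm xy)$ whenever $\zeta+\xi = \theta$, trivial otherwise. For $q_j u q_j^{-1} = u$ to hold in the full group, the net correction in $U_{21}$ must vanish; on orbit-constant values it becomes a quadratic polynomial in $a, b, c, d$. The key characteristic-$2$ phenomenon is the Frobenius identity $(a+b+c+d)^2 = a^2 + b^2 + c^2 + d^2$: once one enumerates the ten pairs $(\zeta, \theta-\zeta)$ of level-$1$ roots and tallies how they are matched by $q_1, q_2$ against the four $\langle q_1, q_2\rangle$-orbits, the quadratic collapses to a nonzero scalar multiple of $a^2+b^2+c^2+d^2$, whose vanishing is equivalent to the linear relation $d = a+b+c$.

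Finally, $t$-invariance is diagonal: since $t = (\alpha^\vee + \epsilon^\vee)(b)$ acts on each $U_\zeta$ by the scalar $b^{\langle\zeta,\alpha^\vee+\epsilon^\vee\rangle}$ with $b$ a primitive cube root of unity, $t$-invariance preserves the given form (possibly imposing further constraints on the orbit parameters, but not altering the canonical shape). The main technical obstacle is the concrete bookkeeping in the lifting step — enumerating the level-$1$ pairs summing to $\theta$, tracking how $q_1$ and $q_2$ match them into orbits, and verifying that the resulting quadratic correction indeed collapses in characteristic $2$ to a nonzero multiple of $a^2+b^2+c^2+d^2$. This is a finite combinatorial check on the explicit permutations, very much in the \textsf{Magma}-assisted spirit of the other computations in the paper.
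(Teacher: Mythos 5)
Your proposal is correct and follows essentially the same route as the paper: write $u$ in canonical form, use the permutation action of $q_1,q_2$ (with trivial signs since $p=2$) to force orbit-constancy of the level-$1$ coordinates, and then observe that reordering after conjugation by $q_2$ produces a $U_{21}$-correction which, by the Frobenius identity, equals $(a+b+c+d)^2$ and so must vanish, giving $d=a+b+c$. The only difference is presentational — you phrase it as "work modulo the centre, then lift," whereas the paper computes $q_2uq_2^{-1}$ in one explicit display — and you defer the finite check that the correction is exactly $a^2+b^2+c^2+d^2$, which is precisely the computation the paper carries out.
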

\begin{proof}
By~\cite[Prop.~8.2.1]{Springer-book}, $u$ can be expressed uniquely as 
$
u = \prod_{i = 1}^{21} \epsilon_i(a_i) \textup{ for some } a_i\in k.
$
Since $p=2$ we have
$
n_\xi \epsilon_\zeta(a) n_\xi^{-1}= \epsilon_{s_\xi\cdot\zeta}(a) \textup{ for any } a\in k \textup{ and }\xi, \zeta \in \Psi(G). 
$ 
Then a calculation using the commutator relations (\cite[Lem.~32.5, Lem.~33.3]{Humphreys-book1}) shows that
\begin{alignat}{2}
q_2 u q_2^{-1}=&\; \epsilon_1(a_2)\epsilon_2(a_1)\epsilon_3(a_4)\epsilon_4(a_3)
\epsilon_5(a_6)\epsilon_6(a_5)\epsilon_7(a_8)\epsilon_8(a_7)\epsilon_9(a_{14})\epsilon_{10}(a_{11})\epsilon_{11}(a_{10})
\epsilon_{12}(a_{13})\epsilon_{13}(a_{12})\nonumber\\
&\; \epsilon_{14}(a_{9}) \epsilon_{15}(a_{18})\epsilon_{16}(a_{19})\epsilon_{17}(a_{20})\epsilon_{18}(a_{15})
\epsilon_{19}(a_{16})\epsilon_{20}(a_{17})\epsilon_{21}(a_{18}+a_{21}). \label{reordered}
\end{alignat}
\noindent Since $q_1$ and $q_2$ centralize $u$, we have 
$
a_1 = \cdots = a_6, \; a_7 = a_{8}, \; a_{9} = \cdots = a_{14}, \; a_{15} = \cdots = a_{20}.  
$
Set
$
a_1 = a, \; a_7 = b, \; a_{9} = c, \; a_{15} = d. 
$
Then (\ref{reordered}) simplifies to 
\begin{alignat*}{2}
q_2 u q_2^{-1} & =\prod_{i=1}^{6}\epsilon_i(a)\prod_{i=7}^{8}\epsilon_i(b)\prod_{i=9}^{14}\epsilon_i(c)
\left(\prod_{i=15}^{20}\epsilon_i(d)\right)\epsilon_{21}(a^2+b^2+c^2+d^2+a_{21}). 
\end{alignat*}
Since $q_2$ centralizes $u$, comparing the arguments of the $\epsilon_{21}$ term on both sides, we must have
\begin{equation*}
a_{21} = a^2 + b^2 + c^2 + d^2 + a_{21},
\end{equation*}
which is equivalent to
$
a + b + c + d = 0.
$
Then we obtain the desired result. 
\end{proof}

\begin{prop}
$K$ acts non-separably on $R_u(P)$.
\end{prop}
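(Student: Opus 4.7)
The plan is to combine the two previous propositions into the strict inclusion
$\mathrm{Lie}\,C_{R_u(P)}(K)\subsetneq\mathfrak{c}_{\mathrm{Lie}(R_u(P))}(K)$.
The inclusion $\mathrm{Lie}\,C_{R_u(P)}(K)\subseteq\mathfrak{c}_{\mathrm{Lie}(R_u(P))}(K)$ always holds, Proposition~\ref{Liealgebra} supplies the candidate element $e_7+e_8$ on the right, and I only have to show it does not lie in $\mathrm{Lie}\,C_{R_u(P)}(K)$.

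To do this I would first use Proposition~\ref{positive proposition} to parametrize the centralizer. Since $-1=1$ in characteristic $2$, the proposition exhibits $C_{R_u(P)}(K)$ as the image of the morphism
$\varphi:\mathbb{A}^4\longrightarrow R_u(P)$ given by
\[
\varphi(a,b,c,a_{21})=\prod_{i=1}^{6}\epsilon_i(a)\prod_{i=7}^{8}\epsilon_i(b)\prod_{i=9}^{14}\epsilon_i(c)\Bigl(\prod_{i=15}^{20}\epsilon_i(a+b+c)\Bigr)\epsilon_{21}(a_{21}).
\]
Using the isomorphism of varieties $\prod_{i=1}^{21}U_{\zeta_i}\cong R_u(P)$ from~\cite[Prop.~8.2.1]{Springer-book}, this realises $C_{R_u(P)}(K)$ as a smooth $4$-dimensional closed subgroup.

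Next I would compute $\mathrm{Lie}\,C_{R_u(P)}(K)$ as the image of $d\varphi_0$. Taking partial derivatives of $\varphi$ at the origin yields four explicit spanning tangent vectors:
\[
v_a=\sum_{i=1}^{6}e_i+\sum_{i=15}^{20}e_i,\quad v_b=e_7+e_8+\sum_{i=15}^{20}e_i,\quad v_c=\sum_{i=9}^{14}e_i+\sum_{i=15}^{20}e_i,\quad v_{a_{21}}=e_{21}.
\]
A short support argument then shows $e_7+e_8$ is not in their span: in any relation $\lambda_a v_a+\lambda_b v_b+\lambda_c v_c+\lambda_{a_{21}}v_{a_{21}}=e_7+e_8$, comparing coefficients of $e_1$, $e_9$ and $e_{21}$ forces $\lambda_a=\lambda_c=\lambda_{a_{21}}=0$, while the coefficient of $e_7$ forces $\lambda_b=1$; but then the coefficient of $e_{15}$ equals $\lambda_a+\lambda_b+\lambda_c=1\neq 0$, a contradiction.

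Together with Proposition~\ref{Liealgebra} this gives the required strict inclusion and hence non-separability. I do not expect a serious obstacle here: Propositions~\ref{Liealgebra} and~\ref{positive proposition} have already done the real work, and the only mild subtlety is verifying that the Lie algebra of $C_{R_u(P)}(K)$ really is the image of $d\varphi_0$, which follows at once from Proposition~\ref{positive proposition} giving a \emph{complete} parametrization (so every smooth curve through the identity in $C_{R_u(P)}(K)$ has the assumed form, and its tangent vector is a linear combination of $v_a,v_b,v_c,v_{a_{21}}$).
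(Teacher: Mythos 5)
Your argument is correct and is essentially the paper's own proof: the paper likewise combines Proposition~\ref{Liealgebra} with Proposition~\ref{positive proposition} and cites an external reference for exactly the tangent-space computation you carry out explicitly. The only imprecision is that Proposition~\ref{positive proposition} gives containment of $C_{R_u(P)}(K)$ in the image of $\varphi$ rather than equality, but since that image is a closed linear subvariety of $R_u(P)\cong\mathbb{A}^{21}$ the inclusion $\mathrm{Lie}\,C_{R_u(P)}(K)\subseteq\mathrm{span}(v_a,v_b,v_c,v_{a_{21}})$ is all you need, and your span computation then finishes the proof.
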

\begin{proof}
Proposition~\ref{positive proposition} and a similar argument to that of the proof of~\cite[Prop.~3.3]{Uchiyama-Separability-JAlgebra} show that $e_{7}+e_{8}\notin \textup{Lie}\,C_{R_u(P)}(K)$. Then Proposition~\ref{Liealgebra} gives the desired result.
\end{proof}

\begin{rem}\label{important}
The following three facts are essential for the argument above:
\begin{tight_enumerate}
\item The orbit $O_7$ contains a pair of roots corresponding to a non-commuting pair of root subgroups which get swapped by $q_2$; $q_2\cdot(\epsilon_{7}(a)\epsilon_{8}(a))=\epsilon_{8}(a)\epsilon_{7}(a)=\epsilon_{7}(a)\epsilon_{8}(a)\epsilon_{21}(a^2)$.
\item The correction term $\epsilon_{21}(a^2)$ in the last equation is contained in $Z(R_u(P))$. 
\item The root $21$ corresponding to the correction term is fixed by $\pi(q_2)$.
\end{tight_enumerate}
\end{rem}

Now, let
$C:= \left\{\prod_{i=7}^{8} \epsilon_i(a) \mid a\in k \right\}$, pick any $a\in k^*$, and let $v(a):=\prod_{i=7}^{8}\epsilon_i(a)$. Now set 
$H := v(a) K v(a)^{-1} = \langle q_1, q_2 \epsilon_{21}(a^2), t \rangle.$
Note that $H \subset M, H\not\subset L$. 

\begin{prop}\label{non-M-cr}
$H$ is not $M$-cr.
\end{prop}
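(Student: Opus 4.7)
The plan is to apply Proposition~\ref{GIT} to a canonical parabolic subgroup of $M$ containing $H$ and derive a contradiction from the very simple structure of its unipotent radical.

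First, view the cocharacter $\lambda$ used to define $P$ in $G$ as a cocharacter of $M$ via the inclusion $Y(T)\subseteq Y(M)$. Then $P_\lambda(M)=P_\lambda(G)\cap M$; since among the roots of $M=\langle L,G_{21}\rangle$ only $\pm 21$ fail to be roots of $L$, and $\lambda$ pairs positively with $21$ and trivially with the roots of $L$, one obtains $L_\lambda(M)=L$ and $R_u(P_\lambda(M))=U_{21}$, a one-dimensional abelian unipotent group. Because $q_1,q_2,t\in L$ and $\epsilon_{21}(a^2)\in U_{21}$, we have $H\subset P_\lambda(M)$, while $H\not\subset L$ by the remark preceding the statement.

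Now suppose for contradiction that $H$ is $M$-cr. Applying Proposition~\ref{GIT} to $H\subset P_\lambda(M)$ produces some $v'=\epsilon_{21}(c)\in U_{21}$ with $c_\lambda(h)=v'hv'^{-1}$ for every $h\in H$. The projection $c_\lambda$ kills the $U_{21}$-component, so in particular $c_\lambda(q_2\epsilon_{21}(a^2))=q_2$. On the other hand, because $\pi(q_2)$ fixes root $21$ (the orbit $O_{21}=\{21\}$) and $p=2$, iterating the Chevalley relation $n_\xi\epsilon_\zeta(c)n_\xi^{-1}=\epsilon_{s_\xi\cdot\zeta}(\pm c)$ yields $q_2\epsilon_{21}(c)q_2^{-1}=\epsilon_{21}(c)$, so $q_2$ commutes with $U_{21}$. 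Since $U_{21}$ is abelian, this gives $v'(q_2\epsilon_{21}(a^2))v'^{-1}=q_2\epsilon_{21}(a^2)$. Equating with $c_\lambda(q_2\epsilon_{21}(a^2))=q_2$ forces $\epsilon_{21}(a^2)=1$, hence $a=0$, contradicting $a\in k^*$.

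The main obstacle is really just identifying the right parabolic of $M$; once $R_u(P_\lambda(M))$ is seen to be the one-dimensional subgroup $U_{21}$, the contradiction is essentially automatic and mirrors the three facts listed in Remark~\ref{important}: the correction term $\epsilon_{21}(a^2)$ lies in $U_{21}$, its root $21$ is fixed by $\pi(q_2)$, and $U_{21}$ is actually the whole of $R_u(P_\lambda(M))$, so conjugation inside $M$ has no room in which to absorb this extra factor.
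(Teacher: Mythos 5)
Your argument is correct, and it reaches the conclusion by a genuinely shorter route than the paper's. Both proofs start identically: apply Proposition~\ref{GIT} inside $M$ to the parabolic $P_\lambda(M)$ containing $H$, whose unipotent radical is the single root group $U_{21}$. The paper then converts the hypothetical conjugating element $m=\epsilon_{21}(a_{21})$ into the membership statement $m\,v(a)\in C_{R_u(P_\lambda)}(K)$ and contradicts the explicit description of that centralizer (Proposition~\ref{positive proposition}); this is precisely where the non-separability data enters the argument. You instead observe that $R_u(P_\lambda(M))=U_{21}$ centralizes $h_2=q_2\epsilon_{21}(a^2)$ --- root $21$ is fixed by $\pi(q_2)$, $p=2$ kills the signs in the Chevalley relations, and $U_{21}$ is abelian --- so the $R_u(P_\lambda(M))$-orbit of $h_2$ is the single point $h_2\neq q_2=c_\lambda(h_2)$, and no appeal to Proposition~\ref{positive proposition} is needed. (Note that failure for the single element $h_2$ already rules out a simultaneous conjugator for all of $H$, so restricting attention to $h_2$ is legitimate.) What your shortcut buys is brevity and transparency; what it gives up is robustness: it depends on $R_u(P_\lambda(M))$ being one-dimensional and centralized by $q_2$, whereas in the $E_7$ and $E_8$ cases $R_u(P_\lambda(M))$ has dimension $7$ and $28$ respectively, and there the paper's detour through the explicit computation of $C_{R_u(P)}(K)$ is what allows ``the same argument'' to be quoted. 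The centralizer description is also reused elsewhere (Theorems~\ref{rationality} and~\ref{conjugacy-counterexample}), so the paper has independent reasons to route the proof through it.
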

\begin{proof}
Let $\lambda=\alpha^{\vee}+2\beta^{\vee}+3\gamma^{\vee}+2\delta^{\vee}+\epsilon^{\vee}+2{\sigma}^{\vee}$. 
Then $L=L_\lambda$, $P=P_\lambda$. Let $c_\lambda : P_\lambda \rightarrow L_\lambda$ be the homomorphism from Definition~\ref{homomorphism}. 
In order to prove that $H$ is not $M$-cr, by Proposition~\ref{GIT} it suffices to find a tuple $(h_1, h_2)\in H^2$ that is not  $R_u\left(P_\lambda(M)\right)$-conjugate to $c_\lambda\left((h_1, h_2)\right)$. Set 
$
h_1 := v(a) q_1 v(a)^{-1}, \; h_2 := v(a) q_2 v(a)^{-1}. 
$
Then
\begin{equation*}
c_\lambda\left((h_1, h_2)\right) = \lim_{x\rightarrow 0}\left(\lambda(x)q_1 \lambda(x)^{-1}, \; \lambda(x)q_2 \epsilon_{21}(a^2)\lambda(x)^{-1}\right)=(q_1, q_2).
\end{equation*}   
Now suppose that $(h_1, h_2)$ is $R_u\left(P_\lambda(M)\right)$-conjugate to $c_\lambda\left((h_1, h_2)\right)$. Then there exists $m\in R_u\left(P_\lambda(M)\right)$ such that
$
 m v(a) q_1 v(a)^{-1}m^{-1} = q_1, \,
 m v(a) q_2 v(a)^{-1}m^{-1} = q_2. 
$
Thus we have
$
m v(a) \in C_{R_u(P_\lambda)}(K). 
$
Note that 
$
\Psi\left(R_u\left(P_\lambda(M)\right)\right) = \{21\}. 
$
Let  
$
m = \epsilon_{21}(a_{21}) \textup{ for some }a_{21}\in k.
$
Then we have
$
 m v(a) = \epsilon_{7}(a) \epsilon_{8}(a) \epsilon_{21}(a_{21}) \in C_{R_u(P_\lambda)}(K). 
$
This contradicts Proposition~\ref{positive proposition}.
\end{proof}

\begin{prop}\label{G-cr}
$H$ is $G$-cr.
\end{prop}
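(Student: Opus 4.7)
The first observation is that $H = v(a) K v(a)^{-1}$ is a $G$-conjugate of $K$, since $v(a) \in R_u(P) \subset G$. As $G$-complete reducibility is invariant under $G$-conjugation, it suffices to show that $K$ itself is $G$-cr. For this, I would invoke the Bate--Martin--R\"ohrle reduction to a Levi subgroup~\cite{Bate-geometric-Inventione}: since $K \subset L = L_\lambda$ with $L$ a Levi subgroup of $G$, $K$ is $G$-cr if and only if $K$ is $L$-cr. Hence the task reduces to verifying $L$-complete reducibility of $K$.

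Because $G$ is simply connected of type $E_6$, $[L,L] \cong SL_6$, and $L$-complete reducibility of $K$ is equivalent to semisimplicity of the natural module $k^6$ as a module over the image $\tilde K$ of $K$ in $SL_6$. The group $\tilde K$ is generated by two permutation matrices (the images of $q_1$ and $q_2$ under the identification $W_L \cong S_6$) together with $\bar t = \operatorname{diag}(b, b^{-1}, 1, 1, b, b^{-1})$, a semisimple element with three two-dimensional eigenspaces corresponding to eigenvalues $b$, $b^{-1}$, $1$. Any $\tilde K$-invariant subspace $V \subset k^6$ must be $\bar t$-stable, hence a direct sum of intersections with these three eigenspaces, and the way the two permutations mix these eigenspaces constrains the invariant subspace lattice sharply. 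One carries out a short combinatorial analysis (or equivalently a Magma computation, as the paper invokes in the proof of Theorem~\ref{conjugacy-counterexample}) to show that every $\tilde K$-invariant subspace of $k^6$ admits an invariant complement, concluding $L$-complete reducibility of $K$.

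The main technical subtlety is that, because $2$ divides $|K|$, complete reducibility of $\tilde K$-modules in characteristic $2$ is not a formal consequence of general theory; it depends on the specific interaction of $\bar t$ with the two permutation generators. Indeed, the semisimple element $t = (\alpha^\vee + \epsilon^\vee)(b)$ was added in Step~(2) of the construction of $K$ precisely so that the $\bar t$-eigenspace decomposition is compatible with the $q_1,q_2$-action in a way that forces every invariant subspace of $k^6$ to split off. Executing this decomposition of $k^6$ into irreducible $\tilde K$-submodules is the core technical content of the proof.
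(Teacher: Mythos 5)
Your argument follows the paper's proof exactly: reduce to $K$ by conjugation-invariance of $G$-complete reducibility, reduce from $G$ to the Levi $L$ and then to $[L,L]\cong SL_6$, and conclude by checking that $K$ acts semisimply on the natural module $k^6$ (the paper dispatches this last step as "an easy matrix computation," which you flesh out correctly via the eigenspace decomposition of $\bar t=\mathrm{diag}(b,b^{-1},1,1,b,b^{-1})$ and its interaction with the two permutation generators). No gaps; the only cosmetic difference is the choice of reference for the Levi reduction step.
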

\begin{proof}
Since $H$ is $G$-conjugate to $K$, it is enough to show that $K$ is $G$-cr. Since $K$ is contained in $L$, by~\cite[Prop.~3.2]{Serre-building} it suffices to show that $K$ is $L$-cr. Then by~\cite[Lem.~2.12]{Bate-geometric-Inventione}, it is enough to show that $K$ is $[L,L]$-cr. Note that $[L,L]=SL_6$. An easy matrix computation shows that $K$ acts semisimply on $k^n$, so $K$ is $G$-cr by~\cite[Ex.~3.2.2(a)]{Serre-building}.
\end{proof}  

It is clear that similar arguments work for the other cases. We omit proofs.

\section{The $E_7$ examples}
Let $G$ be a simple simply-connected algebraic group of type $E_7$ defined over $k$. Fix a maximal torus $T$ of $G$, and a Borel subgroup of $G$ containing $T$. We define the set of simple roots $\Sigma=\{\alpha, \beta, \gamma, \delta, \epsilon, \eta, \sigma \}$ as in the following Dynkin diagram. The positive roots of $G$ are listed in~\cite[Appendix, Table B]{Freudenthal-Lie-book}.
\newline
\begin{figure}[h!]
                \centering
                \scalebox{0.7}{
                \begin{tikzpicture}
                \draw (1,0) to (2,0);
                \draw (2,0) to (3,0);
                \draw (3,0) to (4,0);
                \draw (4,0) to (5,0);
                \draw (5,0) to (6,0);
                \draw (4,0) to (4,1);
                \fill (1,0) circle (1mm);
                \fill (2,0) circle (1mm);
                \fill (3,0) circle (1mm);
                \fill (4,0) circle (1mm);
                \fill (5,0) circle (1mm);
                \fill (6,0) circle (1mm);
                \fill (4,1) circle (1mm);
                \draw[below] (1,-0.3) node{$\alpha$};
                \draw[below] (2,-0.3) node{$\beta$};
                \draw[below] (3,-0.3) node{$\gamma$};
                \draw[below] (4,-0.3) node{$\delta$};
                \draw[below] (5,-0.3) node{$\epsilon$};
                \draw[below] (6,-0.3) node{$\eta$};
                \draw[right] (4.3,1) node{$\sigma$};
                \end{tikzpicture}}
\end{figure}

Let $L$ be the subgroup of $G$ generated by $T$ and all root subgroups of $G$ with $\sigma$-weight $0$. Let $P$ be the subgroup of $G$ generated by $L$ and all root subgroups of $G$ with $\sigma$-weight $1$ or $2$. Then $P$ is a parabolic subgroup of $G$ and $L$ is a Levi subgroup of $P$. Let $W_L:=\langle n_\alpha, n_\beta, n_\gamma, n_\delta, n_\epsilon, n_\eta \rangle$. Let $M$ be the subgroup of $G$ generated by $L$ and all root subgroups of $G$ with $\sigma$-weight $\pm2$. Then $M$ is the subsystem subgroup of $G$ of type $A_7$, and $(G,M)$ is a reductive pair.  

In the $E_7$ cases, we take $t=1$ and $K':=K$; so each $K$ is a subgroup of $W_L$. We use the same method as the $E_6$ examples, so we just give a sketch.

Using Magma, we found $95$ non-trivial subgroups $K$ of $W_L$ up to conjugacy, and $19$ of them are $G$-cr. Only two of them act non-separably on $R_u(P)$ (see Table 2). We determined $G$-complete reducibility and non-separability of $K$ by a similar argument to that of the proof of Proposition~\ref{G-cr}. Note that $[L,L]=SL_7$. We identify $n_\alpha$, $\cdots$, $n_\eta$ with $(1 2),\cdots, (6 7)$ in $S_7$. 

\begin{table}[!h]
\centering
\scalebox{0.7}{
\begin{tabular}{|l|l|l|l|l|l|}
\hline
 case & generators of $K$ & $|K|$  \\
\hline
 1 & (2 5)(3 7)(4 6), (1 4 3 2 5 7 6) & 14 \\
 2 & (2 6 7)(3 5 4), (2 5)(3 7)(4 6), (1 6 7 5 2 3 4)& 42 \\
\hline
\end{tabular}
}
\caption{The $E_7$ examples}\label{E7examples}
\end{table}

\noindent$\bullet$ Case $1$ was in~\cite[Sec.~3]{Uchiyama-Separability-JAlgebra}.\\
$\bullet$ Case $2$: 

Let $q_1 = n_\epsilon n_\gamma n_\alpha, q_2 = n_\alpha n_\gamma n_\alpha n_\beta n_\gamma n_\alpha n_\beta n_\gamma n_\eta n_\epsilon n_\delta n_\gamma n_\beta$,
$K =\langle q_1, q_2 \rangle\cong \textup{Frob}_{42}$ (Frobenius group of order $42$). 
We label some roots of $G$ in Table~\ref{Case14} in Appendix. It can be calculated that $K$ has an orbit $\{1,\cdots, 14\}$ which contains only one non-commuting pair of roots $\{2, 10\}$ contributing to a correction term that lies in $U_{15}$. Also, $\pi(q_1)$ swaps $2$ with $10$, and fixes $15$. Thus $K$ acts non-separably on $R_u(P)$ (see Remark~\ref{important}). Now, set $v(a)=\prod_{i=1}^{14}\epsilon_{i}(a)$, and $H:=v(a)\cdot K$. Then a similar argument to that of the proof of Proposition~\ref{non-M-cr} show that $H$ is not $M$-cr.

\section{The $E_8$ examples}
Let $G$ be a simple simply-connected algebraic group of type $E_8$ defined over $k$. Fix a maximal torus $T$ and a Borel subgroup $B$ containing $T$. Define $\Sigma =\{\alpha, \beta, \gamma, \delta, \epsilon, \eta, \xi, \sigma\}$ by the next Dynkin diagram. All roots of $G$ are listed in~\cite[Appendix, Table B]{Freudenthal-Lie-book}.
\begin{figure}[h]
                \centering
                \scalebox{0.7}{
                \begin{tikzpicture}
                \draw (0,0) to (1,0);
                \draw (1,0) to (2,0);
                \draw (2,0) to (3,0);
                \draw (3,0) to (4,0);
                \draw (4,0) to (5,0);
                \draw (5,0) to (6,0);
                \draw (4,0) to (4,1);
                \fill (0,0) circle (1mm);
                \fill (1,0) circle (1mm);
                \fill (2,0) circle (1mm);
                \fill (3,0) circle (1mm);
                \fill (4,0) circle (1mm);
                \fill (5,0) circle (1mm);
                \fill (6,0) circle (1mm);
                \fill (4,1) circle (1mm);
                \draw[below] (0,-0.3) node{$\alpha$};
                \draw[below] (1,-0.3) node{$\beta$};
                \draw[below] (2,-0.3) node{$\gamma$};
                \draw[below] (3,-0.3) node{$\delta$};
                \draw[below] (4,-0.3) node{$\epsilon$};
                \draw[below] (5,-0.3) node{$\eta$};
                \draw[below] (6,-0.3) node{$\xi$};
                \draw[right] (4.3,1) node{$\sigma$};
                \end{tikzpicture}}
\end{figure}
Let $L$ be the subgroup of $G$ generated by $T$ and all root subgroups of $G$ with $\sigma$-weight $0$. Let $P$ be the subgroup of $G$ generated by $L$ and all root subgroups of $G$ with $\sigma$-weight $1$, $2$, or $3$. Let $W_L:=\langle n_\alpha, n_\beta, n_\gamma, n_\delta, n_\epsilon, n_\eta, n_\xi \rangle.$ Then $P$ is a parabolic subgroup of $G$, and $L$ is a Levi subgroup of $P$.
Let $M$ be the subgroup of $G$ generated by $L$ and all root subgroups of $G$ with $\sigma$-weight $\pm2$. Then $M$ is a subsystem subgroup of type $D_8$, and $(G,M)$ is a reductive pair. In the $E_8$ cases, we take $t=1$ and $K':=K$; so each $K$ is a subgroup of $W_L$. We use the same method as in the $E_6, E_7$ examples, so we just give a sketch.

With Magma, we found $295$ non-trivial subgroups $K$ of $W$ up to conjugacy, and $31$ of them are $G$-cr. Only two of them act non-separably on $R_u(P)$ (see Table 3). Note that $[L,L]\cong SL_8$. We identify $n_\alpha, \cdots, n_\xi$ with $(1 2), \cdots, (7 8)$ in $S_8$. 
\begin{table}[!h]
\centering
\scalebox{0.7}{
\begin{tabular}{|l|l|l|l|l|l|}
\hline
 case & generators of $K$ & $|K|$  \\
\hline
 1 & (2 6)(4 5)(7 8), (1 4 2 8 7 6 5) & 14 \\
 2 & (1 7 5)(2 6 8), (1 2)(5 8)(6 7), (1 2 7 5 4 8 6) &42\\
\hline 
\end{tabular}
}
\caption{The $E_8$ examples}\label{E8examples}
\end{table}

\noindent$\bullet$ Case 1:

Let 
$
q_1 = n_\beta n_\gamma n_\delta n_\epsilon n_\delta n_\gamma n_\beta n_\delta n_\xi, \;
q_2 = n_\alpha n_\beta n_\gamma n_\beta n_\alpha n_\beta n_\delta n_\epsilon n_\eta n_\xi n_\eta n_\epsilon n_\delta n_\gamma n_\beta n_\xi n_\eta n_\epsilon,\\
K=\langle q_1, q_2 \rangle.
$

We label some roots of $G$ as in Table~\ref{Case9} in the Appendix. It can be calculated that $K$ has an orbit $O_{1}=\{1,\cdots,7\}$ which contains only one non-commuting pair of roots $\{3, 4\}$, contributing a correction term that lies in  $U_8$. Also $\pi(q_1)$ swaps $3$ with $4$, and fixes $8$. So $K$ acts nonseparably on $R_u(P)$ (see Remark~\ref{important}). Now let $v(a)=\prod_{i=1}^{7}\epsilon_i(a)$, and define $H=v(a)\cdot K$. Then it is clear that 
$H$ is not $M$-cr by the same argument as in the $E_6$ cases. 

\noindent$\bullet$ Case 2:

Let
$
q_1 = n_\alpha n_\beta n_\gamma n_\delta n_\epsilon n_\eta n_\epsilon n_\delta n_\gamma n_\beta n_\alpha n_\epsilon n_\eta n_\epsilon n_\beta n_\gamma n_\delta n_\epsilon n_\delta n_\gamma n_\beta n_\eta n_\xi n_\eta,\\
q_2 = n_\alpha n_\epsilon n_\eta n_\xi n_\eta n_\epsilon n_\eta, 
q_3 = n_\alpha n_\beta n_\gamma n_\delta n_\epsilon n_\eta n_\epsilon n_\delta n_\gamma n_\beta n_\epsilon n_\xi n_\eta n_\epsilon n_\delta n_\eta n_\xi n_\eta,
K = \langle q_1, q_2, q_3 \rangle.
$

We label some roots of $G$ as in Table~\ref{Case18} in Appendix. It can be calculated that $K$ has an orbit $O_{1}=\{1,\cdots, 14\}$ which contains only one non-commuting pair of roots $\{4,9\}$ contributing a correction term that lies in $U_{15}$. Also $\pi(q_1)$ swaps $4$ with $9$, and fixes $15$. Let $v(a)=\prod_{i=1}^{14}\epsilon_i(a)$ and define $H:=v(a)\cdot K$. It is clear that the same arguments work as in the last case.
\section{On a question of K\"{u}lshammer for representations of finite groups in reductive groups}
\subsection{The $E_6$ example}
\begin{proof}[Proof of Theorem~\ref{KulshammerA}]
Let $G$ be a simple simply-connected algebraic group of type $E_6$ defined over $k$.
We keep the notation from Sections~2 and 3. Pick $c\in k$ such that $c^3=1$ and $c\neq 1$. Let
\begin{alignat*}{2}
t_1:&=\alpha^{\vee}(c), \; t_2:=\beta^{\vee}(c), \; t_3:=\gamma^{\vee}(c), \; t_4:=\delta^{\vee}(c), \; t_5:=\epsilon^{\vee}(c),\\
q_1:&=n_\alpha n_\beta n_\gamma n_\beta n_\alpha n_\beta n_\gamma n_\beta n_\gamma n_\delta n_\epsilon n_\delta n_\gamma n_\epsilon,\\
q_2:&=n_\alpha n_\beta n_\gamma n_\delta n_\gamma n_\beta n_\alpha n_\beta n_\delta n_\epsilon n_\delta,\\
H':&=\langle t_1,t_2,t_3,t_4,t_5,q_1,q_2 \rangle.
\end{alignat*}
Note that $q_1$ and $q_2$ here are the same as $q_1$ and $q_2$ in Case $4$ of Section~3. Using Magma, we obtain the defining relations of $H'$:
\begin{alignat*}{2}
&t_i^{3}=1, \; q_1^3=1, \; q_2^2=1, \; q_1\cdot t_1 = (t_1 t_2 t_3)^{-1}, \; q_1\cdot t_2 = t_1 t_2 t_3 t_4 t_5, \; q_1\cdot t_3 = (t_2 t_3 t_4 t_5)^{-1}, \\
&q_1 \cdot t_4 = t_2, \; q_1\cdot t_5 = t_3 t_4, \; q_2\cdot t_1=(t_3 t_4)^{-1},\; q_2\cdot t_2=t_2^{-1}, \; q_2\cdot t_3=t_2 t_3 t_4 t_5, \\
&q_2\cdot t_4= (t_1 t_2 t_3 t_4 t_5)^{-1}, \; q_2\cdot t_5 = t_1 t_2 t_3, \; [t_i, t_j]=1, \; (q_1^2 q_2)^2=1.
\end{alignat*}

Let 
\begin{alignat*}{2}
\Gamma:= F \times C_2 = &\langle r_1, r_2, r_3, r_4, r_5, s_1, s_2, z \mid r_i^3=s_1^3=1, s_2^2=1, s_1 r_1 s_1^{-1}= 
               (r_1 r_2 r_3)^{-1}, \\
                &s_1 r_2 s_1^{-1}= r_1 r_2 r_3 r_4 r_5, s_1 r_3 s_1^{-1}= 
               (r_2 r_3 r_4 r_5)^{-1}, s_1 r_4 s_1^{-1}= r_2, s_1 r_5 s_1^{-1}= r_3 r_4, \\
               &s_2 r_1 s_2^{-1} = (r_3 r_4)^{-1}, s_2 r_2 s_2^{-1}= r_2^{-1}, 
               s_2 r_3 s_2^{-1}= r_2 r_3 r_4 r_5, s_2 r_4 s_2^{-1}=(r_1 r_2 r_3 r_4 r_5)^{-1},\\
               & s_2 r_5 s_2^{-1}= r_1 r_2 r_3, [r_i, r_j] = (s_1^2 s_2)^2 = [r_i, z] = [s_i, z] = 1 \rangle.
\end{alignat*}
Then $F\cong 3^{1+2}:3^2:S_3$ and $|F|=1458=2\times 3^6$. Let $\Gamma_2:=\langle s_2, z \rangle$ (a Sylow $2$-subgroup of $\Gamma$). It is clear that $F\cong H'$. 

For any $a\in k$ define $\rho_a \in \textup{Hom}(\Gamma, G)$ by
\begin{equation*}
\rho_a(r_i)=t_i,\; \rho_a(s_1)=q_1,\; \rho_a(s_2)=q_2 \epsilon_{21}(a),\; \rho_a(z)=\epsilon_{21}(1).
\end{equation*}
It is easily checked that this is well-defined.
\begin{lem}\label{Gconj}
$\rho_a|_{\Gamma_2}$ is $G$-conjugate to $\rho_b|_{\Gamma_2}$ for any $a,b\in k$.
\end{lem}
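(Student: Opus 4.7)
The plan is to produce, for each pair $a,b\in k$, an explicit element $g\in G$ conjugating $\rho_a|_{\Gamma_2}$ to $\rho_b|_{\Gamma_2}$. Since $\Gamma_2 = \langle s_2, z\rangle$ with $\rho_a(s_2) = q_2 \epsilon_{21}(a)$ and $\rho_a(z)=\epsilon_{21}(1)$ independent of $a$, I need $g$ satisfying
\begin{equation*}
g\, q_2 \epsilon_{21}(a)\, g^{-1} = q_2 \epsilon_{21}(b) \qquad \text{and} \qquad g\, \epsilon_{21}(1)\, g^{-1} = \epsilon_{21}(1).
\end{equation*}
Because $21$ is the highest root of $G$, the root subgroup $U_{21}$ lies in $Z(R_u(P))$; hence any $g\in R_u(P)$ automatically satisfies the second equation, and I may restrict the search to $R_u(P)$.

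The natural candidate is $g = v(c) := \epsilon_{7}(c)\epsilon_{8}(c)$, i.e., a point on the curve $C$ from Case~$4$. Specialising the computation of $q_2 u q_2^{-1}$ carried out in the proof of Proposition~\ref{positive proposition} to the orbit-constant values $a=0,\ b=c,\ c=0,\ d=0,\ a_{21}=0$ (so that $u=v(c)$) yields
\begin{equation*}
q_2\, v(c)\, q_2^{-1} = v(c)\, \epsilon_{21}(c^2).
\end{equation*}
Now $U_{21}$ is central in $R_u(P)$, and because $p=2$ the sign in $n_\zeta \epsilon_\xi(a) n_\zeta^{-1} = \epsilon_{s_\zeta\cdot\xi}(\pm a)$ drops out, so $q_2$ centralizes $U_{21}$ (the element $\pi(q_2)$ fixes the root $21$). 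A short rearrangement of the displayed identity then gives
\begin{equation*}
v(c)\, q_2\, v(c)^{-1} = q_2\, \epsilon_{21}(c^2).
\end{equation*}

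Using centrality of $U_{21}$ once more,
\begin{equation*}
v(c)\, q_2 \epsilon_{21}(a)\, v(c)^{-1} = q_2\, \epsilon_{21}(a + c^2).
\end{equation*}
Since $k$ is algebraically closed, for every $a,b\in k$ we can pick $c \in k$ with $c^2 = a+b$. With this choice, $g := v(c)$ satisfies both required conjugation relations, so $\rho_a|_{\Gamma_2}$ and $\rho_b|_{\Gamma_2}$ are $G$-conjugate, completing the proof.

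The only delicate point is justifying the key identity $q_2 v(c) q_2^{-1} = v(c)\epsilon_{21}(c^2)$: I expect to get it for free from the commutator bookkeeping already done in Proposition~\ref{positive proposition} (the $c^2$ appears as precisely the $b^2$ summand of the $a^2+b^2+c^2+d^2$ correction term produced when the two swapped factors in the orbit $O_7=\{7,8\}$ are brought back to standard order), so no new root-system calculation is needed.
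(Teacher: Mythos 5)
Your proposal is correct and is essentially the paper's own argument: the paper conjugates $\rho_0|_{\Gamma_2}$ to $\rho_a|_{\Gamma_2}$ by $u(\sqrt a)=\epsilon_7(\sqrt a)\epsilon_8(\sqrt a)$ using exactly the identity $v(c)\,q_2\,v(c)^{-1}=q_2\epsilon_{21}(c^2)$ and the centrality of $U_{21}$. Your only (harmless) variation is conjugating $\rho_a$ directly to $\rho_b$ via $c^2=a+b$ instead of reducing to the case $b=0$.
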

\begin{proof}
It is enough to prove that $\rho_0|_{\Gamma_2}$ is $G$-conjugate to $\rho_a|_{\Gamma_2}$ for any $a\in k$. Now let
\begin{equation*}
u(\sqrt a)=\epsilon_{7}(\sqrt a)\epsilon_{8}(\sqrt a).
\end{equation*}
Then an easy computation shows that 
\begin{equation*}
u(\sqrt a)\cdot q_2 = q_2\epsilon_{21}(a),\; u(\sqrt a)\cdot \epsilon_{21}(1)=\epsilon_{21}(1). 
\end{equation*}
So we have 
\begin{equation*}
u(\sqrt a)\cdot (\rho_0|_{\Gamma_2})=\rho_{a}|_{\Gamma_2}.
\end{equation*} 
\end{proof}

\begin{lem}\label{nonGconj}
$\rho_a$ is not $G$-conjugate to $\rho_b$ for $a\neq b$.
\end{lem}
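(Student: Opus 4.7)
Suppose for contradiction that $g\in G$ satisfies $g\rho_ag^{-1}=\rho_b$ with $a\neq b$. Since $\rho_a(\gamma)=\rho_b(\gamma)$ for $\gamma\in\{r_1,\dots,r_5,s_1,z\}$, the element $g$ centralizes each of $t_1,\dots,t_5,q_1,\epsilon_{21}(1)$; for $\gamma=s_2$ we get the key equation $g\,q_2\epsilon_{21}(a)\,g^{-1}=q_2\epsilon_{21}(b)$. The plan is to use the centralization constraints to force $g$ into $P_\lambda$ and into a very restrictive form there, then derive a contradiction from the $s_2$-equation.

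The key observation is that $T_0:=\langle t_1,\dots,t_5\rangle\cong(\mathbb{Z}/3)^5$ is ``generic'' in the sense that the only $T_0$-fixed roots of $G$ are $\pm 21$. Indeed, a root $\zeta$ is $T_0$-fixed iff $\langle\zeta,\alpha_i^\vee\rangle\equiv 0\pmod 3$ for $i\in\{1,\dots,5\}$. Decomposing $\mathfrak{g}$ as an $[L,L]\cong SL_6$-module, the weight-$1$ part of $R_u(P)$ is $\Lambda^3 V$ (with $V$ the natural $6$-dimensional $SL_6$-module), whose weights are indexed by $3$-subsets $S\subset\{1,\dots,6\}$ with $SL_6$ fundamental-weight coordinates $n_i=a_i-a_{i+1}\in\{-1,0,1\}$, where $a_i:=[i\in S]$; these cannot all vanish modulo $3$, for that would force $a_1=\cdots=a_6$, contradicting $\sum a_i=3$. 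The same combinatorics rules out every $A_5$-root and every root of $\mathfrak{g}/\mathfrak{m}$. Therefore $C_G(T_0)=T\cdot G_{21}$, where $G_{21}:=\langle U_{21},U_{-21}\rangle\cong SL_2$ (as $G$ is simply connected).

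Choose any decomposition $g=tg_0$ with $t\in T$, $g_0\in G_{21}$. The equation $t(g_0\epsilon_{21}(1)g_0^{-1})t^{-1}=\epsilon_{21}(1)$ forces $g_0\epsilon_{21}(1)g_0^{-1}\in U_{21}$ (as $T$ preserves each root subgroup), so $g_0$ normalizes $U_{21}$ and hence lies in the Borel $B_{21}=T_{21}\cdot U_{21}$ of $G_{21}$; write $g_0=t_{21}u$ with $t_{21}\in T_{21}$ and $u\in U_{21}$. A direct calculation shows the centralization condition is then equivalent to $tt_{21}\in T':=\{t\in T:21(t)=1\}$. Thus $g=(tt_{21})u\in T'\cdot U_{21}\subseteq P_\lambda$, and this is the Levi decomposition of $g$ in $P_\lambda$, with $l:=tt_{21}\in T'\subseteq L_\lambda$ and $u\in U_{21}\subseteq R_u(P_\lambda)$.

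Since $U_{21}$ is central in $R_u(P_\lambda)$ and $[L,L]$ centralizes $U_{21}$ (the character $21$ is trivial on the derived subgroup of $L$), the element $u$ commutes with both $q_2$ and $\epsilon_{21}(a)$; also, $l\in T'$ acts trivially on $U_{21}$. The $s_2$-equation therefore reduces to $lq_2l^{-1}\cdot\epsilon_{21}(a)=q_2\epsilon_{21}(b)$, giving $lq_2l^{-1}=q_2\epsilon_{21}(b-a)$. The left-hand side lies in $L_\lambda$, but the right-hand side lies in $L_\lambda$ only when $\epsilon_{21}(b-a)\in L_\lambda\cap U_{21}=\{1\}$, forcing $a=b$, the desired contradiction. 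The main technical step is the computation $C_G(T_0)=T\cdot G_{21}$ via the weight combinatorics of $\Lambda^3 V$; once that is in hand, the Levi decomposition of $P_\lambda$ and the structural role of $U_{21}$ as the highest-root subgroup deliver the conclusion mechanically.
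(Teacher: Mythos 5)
Your proof is correct, and while it rests on the same pivotal computation as the paper's --- namely $C_G(t_1,\dots,t_5)=T\cdot G_{21}$, for which you supply a genuine weight-combinatorial justification via $\Lambda^3V$ where the paper only says ``a direct computation shows'' --- the endgame is genuinely different. The paper uses the relation coming from $s_1$ to deduce that the torus part $t$ of $g=tm$ commutes with both $q_1$ and $q_2$, invokes a ``quick calculation'' to conclude $t\in G_{21}$ and hence $g\in G_{21}$, and only then brings in the $z$-relation to rule out $G_{21}$-conjugacy of the pairs $(q_2\epsilon_{21}(a),\epsilon_{21}(1))$ and $(q_2\epsilon_{21}(b),\epsilon_{21}(1))$ by an explicit $SL_2$ argument. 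You never use the $s_1$-relation at all: instead you deploy the $z$-relation early to force $g$ into $T'\cdot U_{21}$ (the steps here --- $g_0$ conjugates a nontrivial element of $U_{21}$ into $U_{21}$, hence lies in the Borel of $G_{21}$, and the character condition $21(tt_{21})=1$ --- are all sound), and then derive the contradiction from the uniqueness of the Levi decomposition in $P_\lambda$, i.e., from $L_\lambda\cap R_u(P_\lambda)=\{1\}$. Your route buys a cleaner and more structural finish that sidesteps the paper's unexplained claim that an element of $T$ centralizing $q_1$ and $q_2$ must lie in $G_{21}$; the paper's route uses more of the group $\Gamma$ but requires less bookkeeping about where $g$ sits inside $TG_{21}$. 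Both are valid proofs of the lemma.
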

\begin{proof}
Let $a,b\in k$. Suppose that there exists $g\in G$ such that $g\cdot \rho_a =\rho_b$. Since $\rho_a(r_i)=t_i$, we need $g\in C_G(t_1,t_2,t_3,t_4,t_5)$. A direct computation shows that $C_G(t_1,t_2,t_3,t_4,t_5)=TG_{21}$. So let $g=t m$ for some $t \in T$ and $m\in G_{21}$. Note that $q_2$ centralizes $G_{21}$. So, 
\begin{alignat}{2}
(t q_2 t^{-1})(t m \epsilon_{21}(a) m^{-1} t^{-1})  &= (t m) q_2 \epsilon_{21}(a) (m^{-1} t^{-1})\nonumber\\
                        &= g\cdot \rho_a(s_2)\nonumber\\
                        &= \rho_b(s_2)\nonumber \\
                        & = q_2\epsilon_{21}(b). \label{equation1}
\end{alignat}
Note that $t q_2 t^{-1}\in G_{\alpha\beta\gamma\delta\epsilon}$ and $t m \epsilon_{21}(a)m^{-1}t^{-1}\in G_{21}$. Since $[G_{\alpha\beta\gamma\delta\epsilon}, G_{21}]=1$, it is clear that $G_{\alpha\beta\gamma\delta\epsilon}\cap G_{21}=1$. Now (\ref{equation1}) yields that $t q_2 t^{-1} = q_2$. We also have 
\begin{equation*}
 q_1 = \rho_b(s_1) = g\cdot \rho_a(s_1) = t m \cdot q_1 = t q_1 t^{-1}.
\end{equation*}
So $t$ commutes with $q_1$ and $q_2$. Then a quick calculation shows that $t\in G_{21}$. So $g\in G_{21}$. But $G_{21}$ is a simple group of type $A_1$, so the pair $(q_2 \epsilon_{21}(a), \epsilon_{21}(1))$ is not $G_{21}$-conjugate to $(q_2 \epsilon_{21}(b), \epsilon_{21}(1))$ if $a\neq b$. Therefore $\rho_a$ is not $G$-conjugate to $\rho_b$ if $a\neq b$.
\end{proof}
Now Theorem~\ref{KulshammerA} follows from Lemmas~\ref{Gconj} and \ref{nonGconj}.
\end{proof}


\begin{rem}
One can obtain examples with the same properties as in Theorem~\ref{KulshammerA} for $G=E_7, E_8$ using the $E_7$ and $E_8$ examples in Sections~4 and 5.
\end{rem}


\subsection{The non-connected $A_2$ example}

\begin{proof}[Proof of Theorem~\ref{KulshammerB}]
We have $G^{\circ}=SL_3(k)$. Fix a maximal torus $T$ of $G^{\circ}$, and a Borel subgroup of $G^{\circ}$ containing $T$. Let $\{\alpha, \beta\}$ be the set of simple roots of $G^{\circ}$. Let $c\in k$ such that $|c|=d$ is odd and $c\neq 1$. Define $t:=(\alpha-\beta)^{\vee}(c)$. For each $a\in k$, define $\rho_a\in \textup{Hom}(\Gamma, G)$ by 
\begin{equation*}
\rho_a(r)=t, \; \rho_a(s)=\sigma \epsilon_{\alpha+\beta}(a), \; \rho_a(z)=\epsilon_{\alpha+\beta}(1).
\end{equation*}
An easy computation shows that this is well-defined.
\begin{lem}\label{Kulnonconnected1}
$\rho_a\mid_{\Gamma_2}$ is $G$-conjugate to $\rho_b\mid_{\Gamma_2}$ for any $a, b\in k$.
\end{lem}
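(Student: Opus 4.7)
The plan is to reduce to the case $b=0$: since $G$-conjugacy is an equivalence relation, it suffices to show that $\rho_a|_{\Gamma_2}$ is $G$-conjugate to $\rho_0|_{\Gamma_2}$ for every $a\in k$. Because $\Gamma_2=\langle s,z\rangle$, only two elements must be simultaneously conjugated: $\rho_a(z)=\epsilon_{\alpha+\beta}(1)$ (independent of $a$) and $\rho_a(s)=\sigma\,\epsilon_{\alpha+\beta}(a)$.

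I would then search for a conjugator $g\in SL_3(k)\subseteq G$ of the form $g=\epsilon_\alpha(x)\epsilon_\beta(x)$ for a suitable $x\in k$. This family is chosen for two reasons. First, neither $2\alpha+\beta$ nor $\alpha+2\beta$ is a root of $A_2$, so both $\epsilon_\alpha(x)$ and $\epsilon_\beta(x)$ commute with $\epsilon_{\alpha+\beta}(1)$; hence automatically $g\,\rho_0(z)\,g^{-1}=\rho_a(z)$. Second, the graph automorphism $\sigma$ swaps $\epsilon_\alpha\leftrightarrow\epsilon_\beta$ (any sign ambiguity disappears in characteristic $2$), so $\sigma(g)=\epsilon_\beta(x)\epsilon_\alpha(x)$, and the Chevalley commutator formula together with $p=2$ gives
\[
\sigma(g)=\epsilon_\beta(x)\epsilon_\alpha(x)=\epsilon_{\alpha+\beta}(x^{2})\,\epsilon_\alpha(x)\epsilon_\beta(x)=\epsilon_{\alpha+\beta}(x^{2})\,g.
\]

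Next I would carry out the bookkeeping in the semidirect product $SL_3(k)\rtimes\langle\sigma\rangle$. Using $\sigma h=\sigma(h)\sigma$ for $h\in SL_3(k)$ and the fact that $\sigma$ fixes $\alpha+\beta$ pointwise (so $\sigma\,\epsilon_{\alpha+\beta}(a)=\epsilon_{\alpha+\beta}(a)\sigma$), one rewrites
\[
g\,\rho_0(s)\,g^{-1}=g\sigma g^{-1}=g\,\sigma(g^{-1})\,\sigma.
\]
Substituting $\sigma(g^{-1})=g^{-1}\epsilon_{\alpha+\beta}(x^{2})$ (the inverse of the relation above, using $\mathrm{char}\,k=2$), and collapsing $gg^{-1}=1$, yields $g\sigma g^{-1}=\sigma\,\epsilon_{\alpha+\beta}(x^{2})$. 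Therefore choosing $x\in k$ with $x^{2}=a$ (possible since $k$ is algebraically closed) gives $g\,\rho_0(s)\,g^{-1}=\sigma\,\epsilon_{\alpha+\beta}(a)=\rho_a(s)$, completing the argument.

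The main obstacle is not conceptual but notational: correctly tracking how $\sigma$ and the $\epsilon_\gamma(\cdot)$ interact in the semidirect product and applying the Chevalley commutator formula exactly once. The payoff is the characteristic-$2$ identity $\sigma(g)=\epsilon_{\alpha+\beta}(x^{2})g$, which is precisely what produces the one-parameter family of $G$-conjugate restrictions from the single non-conjugate family $\{\rho_a\}$, mirroring the use of $u(\sqrt a)=\epsilon_7(\sqrt a)\epsilon_8(\sqrt a)$ in Lemma~\ref{Gconj}.
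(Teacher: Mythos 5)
Your proposal is correct and is essentially the paper's own proof: the paper conjugates by $u(\sqrt a)=\epsilon_\alpha(\sqrt a)\epsilon_\beta(\sqrt a)$, which is exactly your $g=\epsilon_\alpha(x)\epsilon_\beta(x)$ with $x^2=a$, and your commutator computation $\sigma(g)=\epsilon_{\alpha+\beta}(x^2)g$ in characteristic $2$ is just the detail behind the paper's one-line claim that $u(\sqrt a)\cdot\sigma=\sigma\epsilon_{\alpha+\beta}(a)$ while $u(\sqrt a)$ fixes $\epsilon_{\alpha+\beta}(1)$.
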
 
\begin{proof}
Let $u(\sqrt a):=\epsilon_\alpha(\sqrt a)\epsilon_\beta(\sqrt a)$. Then
\begin{equation*}
u(\sqrt a)\cdot \sigma = \sigma \epsilon_{\alpha+\beta}(a),\; u(\sqrt a)\cdot \epsilon_{\alpha+\beta}(1)=\epsilon_{\alpha+\beta}(1).
\end{equation*} 
This shows that $u(\sqrt a)\cdot (\rho_0\mid_{\Gamma_2}) = \rho_a\mid_{\Gamma_2}$. 
\end{proof}
\begin{lem}\label{Kulnonconnected2}
$\rho_a$ is not $G$-conjugate to $\rho_b$ if $a\neq b$. 
\end{lem}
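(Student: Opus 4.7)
The plan is to imitate the strategy of Lemma~\ref{nonGconj}, cutting an alleged conjugator $g\in G$ down step by step using the three relations $g\,t\,g^{-1} = t$, $g\,\epsilon_{\alpha+\beta}(1)\,g^{-1} = \epsilon_{\alpha+\beta}(1)$, and $g\,\sigma\epsilon_{\alpha+\beta}(a)\,g^{-1} = \sigma\epsilon_{\alpha+\beta}(b)$, and then deriving $a=b$.

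First I would show $g \in G^\circ$. For $g = \sigma g'$ with $g' \in G^\circ$ one has $g t g^{-1} = \sigma(g' t g'^{-1})\sigma^{-1}$, and since the graph automorphism swaps $\alpha^\vee \leftrightarrow \beta^\vee$ one gets $\sigma^{-1} t \sigma = (\beta-\alpha)^\vee(c) = t^{-1}$; so $g \in C_G(t)$ would force $t$ and $t^{-1}$ to be $G^\circ$-conjugate, which a comparison of eigenvalue multisets in $SL_3$ rules out under the hypothesis that $|c|$ is odd and $c \neq 1$. Thus $g \in C_{G^\circ}(t)$, which is the Levi subgroup $L_t$ of type $A_1$ with root subgroups $U_{\pm(\alpha+\beta)}$; it admits the decomposition $L_t = Z(L_t)^\circ \cdot G_{\alpha+\beta}$ with $Z(L_t)^\circ = (\alpha-\beta)^\vee(k^*)$. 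Next I would use $g\,\epsilon_{\alpha+\beta}(1)\,g^{-1} = \epsilon_{\alpha+\beta}(1)$: inside the $SL_2$-factor $G_{\alpha+\beta}$, the stabilizer of a nontrivial element of $U_{\alpha+\beta}$ in characteristic $2$ is $U_{\alpha+\beta}$ itself, so I may write $g = z u$ with $z = (\alpha-\beta)^\vee(\lambda)$ for some $\lambda \in k^*$ and $u \in U_{\alpha+\beta}$.

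For the final relation, observe that $\sigma$ centralizes $U_{\alpha+\beta}$ elementwise in characteristic $2$ (since $\sigma\cdot(\alpha+\beta) = \alpha+\beta$ and signs are trivial), and $u$ commutes with $\epsilon_{\alpha+\beta}(a)$; the $u$-contribution therefore drops out. Using $\langle\alpha+\beta,(\alpha-\beta)^\vee\rangle = 0$ (so $z$ commutes with $\epsilon_{\alpha+\beta}(a)$) together with $\sigma^{-1} z \sigma = z^{-1}$, which gives $z\sigma z^{-1} = \sigma z^{-2}$, the left-hand side simplifies to $\sigma z^{-2}\epsilon_{\alpha+\beta}(a)$. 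Equating with $\sigma\epsilon_{\alpha+\beta}(b)$ and invoking $T \cap U_{\alpha+\beta} = \{1\}$ forces $z^{-2} = 1$; injectivity of the Frobenius in characteristic $2$ then yields $\lambda = 1$, so $z = 1$ and $a = b$, contradicting $a \neq b$.

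The main obstacle I anticipate is the bookkeeping with $\sigma$: verifying the identities $\sigma^{-1} z \sigma = z^{-1}$ on the central torus $(\alpha-\beta)^\vee(k^*)$ and $\sigma\,\epsilon_{\alpha+\beta}(x)\,\sigma^{-1} = \epsilon_{\alpha+\beta}(x)$ from the explicit graph-automorphism action, and confirming that the eigenvalue comparison in $SL_3$ genuinely excludes elements of $\sigma G^\circ$ from $C_G(t)$ under the stated hypothesis on the order of $c$.
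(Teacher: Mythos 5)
Your proposal is correct and follows essentially the same route as the paper: reduce to $C_G(t)=TG_{\alpha+\beta}$ (you justify the restriction to $G^{\circ}$ via the eigenvalue comparison, a point the paper leaves implicit), then use the two remaining relations to pin the conjugator down inside that $A_1$ and conclude $a=b$. The only difference is organizational: the paper first shows $g\in G_{\alpha+\beta}$ and then invokes non-conjugacy of the pairs $(\sigma\epsilon_{\alpha+\beta}(a),\epsilon_{\alpha+\beta}(1))$ there, whereas you write $g=zu$ with $z\in(\alpha-\beta)^{\vee}(k^*)$, $u\in U_{\alpha+\beta}$ and carry out that last step as one explicit computation.
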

\begin{proof}
Let $a, b \in k$. Suppose that there exists $g\in G$ such that $g\cdot \rho_a = \rho_b$. Since $\rho_a(r)=t$, we have $g\in C_G(t)=T G_{\alpha+\beta}$. So let $g=hm$ for some $h\in T$ and $m\in G_{\alpha+\beta}$. We compute
\begin{alignat}{2}\label{A2comp}
(h\sigma h^{-1})(hm \epsilon_{\alpha+\beta}(a)m^{-1}h^{-1}) &=(hm)\sigma \epsilon_{\alpha+\beta}(a) (m^{-1}h^{-1}) \nonumber\\
                       &=g\cdot \rho_a(s) \nonumber\\
                       &=\rho_b(s) \nonumber\\
                       &=\sigma \epsilon_{\alpha+\beta}(b).
\end{alignat}
Now (\ref{A2comp}) shows that $h$ commutes with $\sigma$. Then $h$ is of the form $h:=(\alpha+\beta)^{\vee}(x)$ for some $x\in k^*$. So $h\in G_{\alpha+\beta}$. Thus $g\in G_{\alpha+\beta}$. 
But $G_{\alpha+\beta}$ is a simple group of type $A_1$, so the pair $(\sigma\epsilon_{\alpha+\beta}(a), \epsilon_{\alpha+\beta}(1))$ is not $G_{\alpha+\beta}$-conjugate to $(\sigma\epsilon_{\alpha+\beta}(b), \epsilon_{\alpha+\beta}(1))$ unless $a=b$. So $\rho_a$ is not $G$-conjugate to $\rho_b$ unless $a=b$.
\end{proof}
Theorem~\ref{KulshammerB} follows from Lemmas~\ref{Kulnonconnected1} and \ref{Kulnonconnected2}.
\end{proof}

\section*{Acknowledgements}
This research was supported by Marsden Grant UOA1021. The author would like to thank Ben Martin and Don Taylor for helpful discussions. 
\section*{Appendix}
\begin{table}[h]
\centering
\scalebox{0.6}{
\begin{tabular}{ccccc}
\rootsE{1}{1}{0}{0}{0}{0}{0}&\rootsE{2}{1}{1}{1}{1}{1}{0}&\rootsE{3}{1}{0}{1}{1}{1}{1}&\rootsE{4}{1}{1}{1}{2}{1}{0}&\rootsE{5}{1}{0}{1}{2}{1}{1}\\
&&&\\
\rootsE{6}{1}{1}{2}{3}{2}{1}&\rootsE{7}{1}{0}{0}{1}{0}{0}&\rootsE{8}{1}{1}{2}{2}{2}{1}&\rootsE{9}{1}{0}{1}{1}{0}{0}&\rootsE{10}{1}{0}{0}{1}{1}{0}\\
&&&\\
\rootsE{11}{1}{0}{1}{1}{1}{0}&\rootsE{12}{1}{1}{1}{2}{1}{1}&\rootsE{13}{1}{1}{2}{2}{1}{1}&\rootsE{14}{1}{1}{1}{2}{2}{1}&\rootsE{15}{1}{1}{1}{1}{0}{0}\\
&&&\\
\rootsE{16}{1}{0}{0}{1}{1}{1}&\rootsE{17}{1}{0}{1}{2}{1}{0}&\rootsE{18}{1}{1}{1}{1}{1}{1}&\rootsE{19}{1}{1}{2}{2}{1}{0}&\rootsE{20}{1}{0}{1}{2}{2}{1}\\
&&&\\
\rootsE{21}{2}{1}{2}{3}{2}{1}&\rootsE{22}{0}{1}{0}{0}{0}{0}&\rootsE{23}{0}{0}{1}{0}{0}{0}&\rootsE{24}{0}{0}{0}{1}{0}{0}&\rootsE{25}{0}{0}{0}{0}{1}{0}\\
&&&\\
\rootsE{26}{0}{0}{0}{0}{0}{1}&\rootsE{27}{0}{1}{1}{0}{0}{0}&\rootsE{28}{0}{0}{1}{1}{0}{0}&\rootsE{29}{0}{0}{0}{1}{1}{0}&\rootsE{30}{0}{0}{0}{0}{1}{1}\\
&&&\\
\rootsE{31}{0}{1}{1}{1}{0}{0}&\rootsE{32}{0}{0}{1}{1}{1}{0}&\rootsE{33}{0}{0}{0}{1}{1}{1}&\rootsE{34}{0}{1}{1}{1}{1}{0}&\rootsE{35}{0}{0}{1}{1}{1}{1}\\
&&&\\
\rootsE{36}{0}{1}{1}{1}{1}{1}
\end{tabular}
}
\caption{The set of positive roots of $E_6$}\label{E6}
\end{table}
\begin{table}[h]
\centering
\scalebox{0.6}{
\begin{tabular}{llll}
\rootsG{1}{1}{1}{1}{1}{1}{0}{0}&\rootsG{2}{1}{0}{1}{1}{2}{1}{1}&\rootsG{3}{1}{1}{2}{2}{2}{1}{1}&\rootsG{4}{1}{0}{1}{1}{1}{1}{0}\\
&&&\\
\rootsG{5}{1}{1}{2}{2}{2}{2}{1}&\rootsG{6}{1}{1}{1}{2}{3}{2}{1}&\rootsG{7}{1}{0}{0}{1}{1}{0}{0}&\rootsG{8}{1}{0}{0}{0}{1}{1}{0}\\
&&&\\
\rootsG{9}{1}{0}{0}{0}{1}{1}{1}&\rootsG{10}{1}{1}{1}{2}{2}{2}{1}&\rootsG{11}{1}{0}{1}{2}{3}{2}{1}&\rootsG{12}{1}{0}{0}{1}{1}{1}{1}\\
&&&\\
\rootsG{13}{1}{1}{1}{1}{2}{1}{0}&\rootsG{14}{1}{0}{1}{2}{2}{1}{0}&\rootsG{15}{2}{1}{2}{3}{4}{3}{2}
\end{tabular}
}
\caption{Case $2$ ($E_7$)}
\label{Case14}
\end{table}
\begin{table}[h!]
\centering
\scalebox{0.6}{
\begin{tabular}{cccc}
\rootsC{1}{1}{0}{0}{0}{0}{0}{0}{0}&\rootsC{2}{1}{0}{1}{1}{1}{1}{0}{0}&\rootsC{3}{1}{0}{0}{0}{0}{1}{1}{1}&\rootsC{4}{1}{0}{1}{1}{2}{2}{1}{0}\\
&&&\\
\rootsC{5}{1}{1}{1}{1}{1}{2}{2}{1}&\rootsC{6}{1}{1}{1}{1}{2}{3}{2}{1}&\rootsC{7}{1}{1}{2}{2}{3}{3}{2}{1}&\rootsC{8}{2}{0}{1}{1}{2}{3}{2}{1}
\end{tabular}
}
\caption{Case $1$ ($E_8$)}
\label{Case9}
\end{table}
\begin{table}[h!]
\centering
\scalebox{0.6}{
\begin{tabular}{cccc}
\rootsC{1}{1}{0}{0}{0}{0}{1}{0}{0}&\rootsC{2}{1}{0}{0}{0}{1}{1}{0}{0}&\rootsC{3}{1}{0}{0}{0}{0}{1}{1}{0}&\rootsC{4}{1}{0}{1}{1}{1}{1}{1}{0}\\
&&&\\
\rootsC{5}{1}{0}{1}{1}{1}{1}{1}{1}&\rootsC{6}{1}{0}{0}{0}{1}{2}{2}{1}&\rootsC{7}{1}{1}{1}{1}{1}{1}{1}{1}&\rootsC{8}{1}{1}{1}{1}{2}{2}{1}{0}\\
&&&\\
\rootsC{9}{1}{1}{1}{1}{1}{2}{1}{1}&\rootsC{10}{1}{0}{1}{1}{2}{2}{1}{1}&\rootsC{11}{1}{1}{2}{2}{2}{2}{1}{0}&\rootsC{12}{1}{1}{1}{1}{2}{2}{2}{1}\\
&&&\\
\rootsC{13}{1}{0}{1}{1}{2}{3}{2}{1}&\rootsC{14}{1}{1}{2}{2}{2}{3}{2}{1}&\rootsC{15}{2}{1}{2}{2}{2}{3}{2}{1}
\end{tabular}
}
\caption{Case $2$ ($E_8$)}
\label{Case18}
\end{table}
\newpage
\bibliography{mybib}

\end{document}